\newtheorem{theorem}{Theorem}
\newtheorem{lemma}{Lemma}
\newtheorem{prop}{Proposition}
\newtheorem{corollary}{Corollary}
\newcommand{\lb}{\left (}
\newcommand{\rb}{\right )}
\newcommand{\lbb}{\left [}
\newcommand{\rbb}{\right ]}
\newcommand{\labs}{\left |}
\newcommand{\rabs}{\right |}
\newcommand{\lbrb}[1]{\lb #1 \rb}
\newcommand{\lbbrbb}[1]{\lbb#1\rbb}
\newcommand{\lbcurly}{\left\{}
\newcommand{\rbcurly}{\right\}}
\newcommand{\abs}[1]{\labs#1\rabs}
\newcommand{\curly}[1]{\lbcurly#1\rbcurly}
\newcommand{\bo}[1]{\mathrm{O}\!\lbrb{#1}}
\newcommand{\so}[1]{\mathrm{o}\!\lbrb{#1}}
\newcommand{\soo}[1]{\mathrm{o}\lbrb{#1}}
\newcommand{\tightoverset}[2]{%
    \mathop{#2}\limits^{\vbox to 0.28ex{\kern -0.2ex\hbox{$#1$}\vss}}}
\newcommand{\Pbb}[1]{\Pb\lb #1\rb}
\newcommand{\Ebb}[1]{\Eb\lbb #1\rbb}
\newcommand{\limi}[1]{\lim\limits_{#1\to \infty}}
\newcommand{\Eb}{\mathbb{E}}
\newcommand{\Rb}{\mathbb{R}}
\newcommand{\Pb}{\mathbb{P}}
\newcommand{\ind}[1]{\mathbb{I}_{\{#1\}}}
\renewcommand{\Re}{\mathtt{Re}}
\title{On the Maximal Multiplicity of Block Sizes in a Random Set Partition}
\author{ {\sc L. Mutafchiev}\thanks{American University in
Bulgaria, 1 "Georgi Izmirliev" Square,
 Blagoevgrad 2700, Bulgaria \  E-mail: Ljuben@aubg.edu}\,\,\thanks{Institute of Mathematics and Informatics, Bulgarian Academy of Sciences,
  "Akad. Georgi Bonchev" bl. 8, Sofia 1113, Bulgaria}  \: {\sc and}  {\sc M.~Savov}\thanks{Institute of
  Mathematics and Informatics, Bulgarian Academy of Sciences,  "Akad. Georgi Bonchev" bl. 8, Sofia 1113, Bulgaria \  E-mail: mladensavov@math.bas.bg}}
\date{}
\begin{document}
\maketitle

\begin{abstract}
We study the asymptotic behavior of the maximal multiplicity
$M_n=M_n(\sigma)$ of the block sizes in a set partition $\sigma$
of $[n]=\{1,2,....,n\}$, assuming that
$\sigma$ is chosen uniformly at
random from the set of all such partitions. It is known that, for
large $n$, the blocks of a random set partition are typically of
size $W=W(n)$, with $We^W=n$. We show that, over subsequences
$\{n_k\}_{k\ge 1}$ of the sequence of the natural numbers,
$M_{n_k}$, appropriately normalized, converges weakly, as
$k\to\infty$, to $\max{\{Z_1,Z_2-u\}}$, where $Z_1$ and $Z_2$ are
independent copies of a standard normal random variable. The
subsequences  $\{n_k\}_{k\ge 1}$, where the weak convergence is
observed, and the quantity $u$ depend on the fractional part $f_n$
of the function $W(n)$. In particular, we establish that
$\lim_{k\to\infty}\lbrb{\frac{1}{2\pi}}^\frac14
\min{\{f_{n_k},1-f_{n_k}\}}\sqrt{n_k}/\log^{7/4}{n_k}=u\in
[0,\infty)\cup\{\infty\}$. The behavior of the largest
multiplicity $M_n$ is in a striking contrast to the similar
statistic of integer partitions of $n$. A heuristic explanation of
this phenomenon is also given.
\end{abstract}

\vspace {.5cm}

 {\bf Key words:} set partition, block size, maximum multiplicity,
 saddle point method, limiting distribution
\vspace{.5cm}

{\bf Mathematics Subject classifications:} 05A18, 60C05, 60F05

\vspace{.2cm}

\section{Introduction}

A partition $\sigma$ of the set $[n]:=\{1,2,...,n\}$ is a
representation of $[n]$ as a union of disjoint, non-empty subsets
called blocks. A block has size $j,j=1,2,...,n$, if it has
cardinality $j$. A partition $\sigma$ of $[n]$ obviously defines
the representation:
\begin{equation}\label{decompo}
n=\sum_{j=1}^n j\mu(j),
\end{equation}
where $\mu(j)\ge 0$ denotes the multiplicity (frequency) of blocks
of size $j$. The total number of set partitions of $[n]$, $B_n$,
is called $n$-th Bell number. It is well known that
\begin{equation}\label{gfbell}
\sum_{n=0}^\infty \frac{B_n}{n!} x^n=e^{e^x -1},
 \end{equation}
 where $x$ is a formal (complex) variable and $B_0:=1$ (regarding that $\emptyset$
 has exactly one partition -\,the empty partition). Moreover, (\ref{gfbell}) implies the following
 formula for Bell numbers:
$$
 B_n=e^{-1}\sum_{k=0}^\infty \frac{k^n}{k!}.
$$
 An asymptotic representation of the numbers $B_n$, as
$n\to\infty$, was found first by Moser and Wyman \cite{MW55}. We
have
\begin{equation*}
\frac{B_n}{n!}\sim \frac{e^{e^W-1}}{W^n\sqrt{2\pi W(W+1)e^W}},
\quad n\to\infty,
\end{equation*}
where $W=W(n)$ is the unique positive root of the equation
\begin{equation}\label{eq}
We^W=n,
\end{equation}
from which one can get
\begin{equation}\label{w}
W(n)=\log{n}-\log{\log{n}} +\bo{\frac{\log{\log{n}}}{\log{n}}}, \quad n\to\infty.
\end{equation}
For more details, we refer the reader, e.g., to \cite[Chapter
6]{D58} and \cite[Chapter VIII.4]{FS09}.

Further, we  introduce the uniform probability measure $P$ on the
set of all partitions of $[n]$ assuming that the probability
$1/B_n$ is assigned to each $n$-set partition $\sigma$. In this
way, each numerical characteristic of $\sigma$ becomes a random
variable (a statistic in the sense of the random generation of set
partitions of $[n]$). In the following, we will be interested in
the asymptotic behavior of the maximal multiplicity of the block
sizes defined by
 \begin{equation}\label{rv}
 M_n=M_n(\sigma):=\max_{1\le j\le n} {\mu(j)},
\end{equation}
where the multiplicities $\mu(j)$ were defined by (\ref{decompo}).
From one side, our study is motivated by the asymptotic results
related to limiting distributions of several set partition
statistics. On the other hand, we are interested in a comparison
between the typical behavior of $M_n$ for large $n$ and that one
of the similar statistic for the integer partitions of $n$
obtained in \cite{M05}. In the brief survey that we present below
we summarize several important results on the asymptotic
enumeration and the probabilistic study of set partitions.

Harper \cite{H67} was apparently the first who has studied set
partitions using a probabilistic approach. As a matter of fact, he
found an appropriate normalization for the total number of blocks
$Y_n$ in a random partition of $[n]$ and showed that
$(\log{n})Y_n/\sqrt{n}-\sqrt{n}$ converges weakly, as
$n\to\infty$, to a standard normal random variable. Sachkov
\cite{S74}, \cite{S78} obtained a multidimensional local limit
theorem for the multiplicities of blocks of sizes $1,2,...,k$ ($k$
being fixed) and found that the largest block size is asymptotic
to $eW$ (see (\ref{eq}) and (\ref{w})) plus a doubly exponentially
distributed random variable. De Laurentis and Pittel \cite{DP83}
proved that most of the blocks are likely to have sizes close to
$W$ ($\sim\log{n}$), see (\ref{w}), and the process counting those
typical blocks converges - in terms of finite dimensional
distributions - to the Brownian Bridge process. Another important
statistic of random set partitions is the total number
$\tilde{Y}_n$ of distinct block sizes. Odlyzko and Richmond
\cite{OR85} showed that $\tilde{Y}_n$ is asymptotic to $eW\sim
e\log{n}$ both in probability and in the mean, while Arratia and
Tavar\'{e} \cite{AT94} extended this result to the convergence of
$\tilde{Y}_n$ in $r$-th mean for every $r\ge 1$. In addition Goh
and Schmutz \cite{GS92} determined the limiting behavior of the
probability that the random set partition has at least one block
of every size less than the largest block size (a gap-free
partition).

A unified approach to a broad class of random combinatorial
structures problems was proposed by Arratia and Tavar\'{e}
\cite{AT94}. It covers also the case of random set partitions. For
more details, we also refer the reader to the subsequent book
\cite{ABT03} (see, in particular, its Chapters 3 and 7). Their
approach is based on a possibility of interpreting the
multiplicities of the blocks in a random partition of $[n]$ as a
specially constructed sequence of independent and Poisson
distributed random variables $\{V_j\}_{j\ge 1}$, whose parameters
are $W^j/j!$, conditioned on the event $\{\sum_{j\ge 1} jV_j=n\}$.
Such conditioning has been also used to study many other
combinatorial structures (e.g., permutations, mappings of a finite
set into itself, integer partitions, etc.; see \cite{ABT03}).
Among many other results, Arratia and Tavar\'{e} \cite{AT94}
obtained estimates for the total variation distance between proper
segments of block size multiplicities and such segments from the
sequence $\{V_j\}_{j\ge 1}$. In a subsequent study Pittel
\cite{P97} confirmed some conjectures of Arratia and Tavar\'{e}
\cite{AT94}, re-derived Sachkov's limiting distribution of the
maximal block size and obtained a functional limit theorem for a
continuous time process composed from block sizes of order
 $$
k(t) =\left\{\begin{array}{ll} \lfloor W+\Phi^{-1}(t)W^{1/2}\rfloor & \qquad  \mbox {if}\qquad t\in (0,1/2], \\
 \lceil W+\Phi^{-1}(t)W^{1/2}\rceil & \qquad \mbox {if}\qquad t\in (1/2,1].
 \end{array}\right.
 $$
 ($\Phi(t)$ is the cumulative distribution function of the
 standard normal random variable). A weak convergence to the
 Brownian bridge process was established.



 Our aim in this paper is to determine asymptotically, as
 $n\to\infty$, the distribution of the maximal multiplicity of
 block sizes $M_n$, defined by (\ref{rv}). To state our main
 result, we need to introduce some notations. With $W=W(n)$ given
 by (\ref{eq}) and (\ref{w}), we set
 \begin{equation}\label{d}
 d_n:=\lfloor W(n)\rfloor,
 \end{equation}
 \begin{equation}\label{f}
 f_n:=W(n)-d_n\in (0,1),
 \end{equation}
 \begin{equation}\label{r}
 R_n:=\frac{W^{d_n}}{d_n!},
 \end{equation}
 \begin{equation}\label{var}
 \vartheta_n:=\min{\{f_n,1-f_n\}}.
 \end{equation}
 Furthermore, let $Z_1$ and $Z_2$ denote two independent copies of
the standard normal random variable.

 We organize the paper as follows. Section \ref{sec:aux} contains some
 auxiliary facts related to the generating functions that we need
 further. In Section \ref{sec:proof} we prove the following limit theorem for
 $M_n$.

 \begin{theorem}\label{thm:main}
    We have the following scenarios.

 (i) If $\vartheta_{n_k}
 =\so{\frac{\log^{7/4}{n_k}}{\sqrt{n_k}}}$ over a
 subsequence $\{n_k\}_{k\ge 1}$, then
 $\frac{M_{n_k}-R_{n_k}}{\sqrt{R_{n_k}}}$ converges weakly, as
 $k\to\infty$, to $\max{\{Z_1,Z_2\}}$.

 (ii) If $\frac{\log^{7/4}{n_k}}{\sqrt{n_k}}=\so{\vartheta_{n_k}}$
over a
 subsequence $\{n_k\}_{k\ge 1}$, then
  $\frac{M_{n_k}-R_{n_k}}{\sqrt{R_{n_k}}}$ converges weakly,
  as $k\to\infty$, to $Z_1$.

 (iii) If
 $\lim_{k\to\infty}\lbrb{\frac{1}{2\pi}}^\frac14\vartheta_{n_k}\frac{\sqrt{n_k}}{\log^{7/4}{n_k}}=u\in\lbrb{0,\infty}$
over a
 subsequence $\{n_k\}_{k\ge 1}$, then $\frac{M_{n_k}-R_{n_k}}{\sqrt{R_{n_k}}}$ converges weakly,
 as $k\to\infty$, to $\max{\{Z_1,Z_2-u\}}$.

 In addition, all these three cases are possible.
 \end{theorem}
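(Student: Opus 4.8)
The plan is to represent the multiplicities of blocks in a random set partition of $[n]$ via the Arratia--Tavar\'e conditioning device: the vector $(\mu(1),\mu(2),\dots)$ has the law of independent Poisson variables $(V_1,V_2,\dots)$ with $\Eb[V_j]=W^j/j!$, conditioned on $\{\sum_{j\ge 1} jV_j = n\}$. The point of the normalisations in \eqref{d}--\eqref{var} is that $\Eb[V_j]=W^j/j!$ is maximised (over integer $j$) essentially at $j=d_n$ and $j=d_n+1$ (since $W^{j}/j!$ is unimodal with peak near $W=d_n+f_n$), and $R_{n}=W^{d_n}/d_n!$ is that peak value, while the next-largest mean is $W^{d_n+1}/(d_n+1)! = R_n \cdot W/(d_n+1)=R_n(1-\vartheta_n')$ for the relevant side. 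All the other means $W^j/j!$ with $|j-W|$ bounded away from $0$ are smaller than $R_n$ by a multiplicative factor bounded away from $1$, and hence are smaller by an additive amount of order $\sqrt{R_n}\,\omega_n$ for any $\omega_n\to\infty$; since $R_n\to\infty$ (one checks $R_n\sim \sqrt{n/(2\pi)}/\log^{?}n$ up to log powers, using Stirling and \eqref{w}), a Poisson variable with mean $m$ has fluctuations of order $\sqrt m$, so only $V_{d_n}$ and $V_{d_n+1}$ can realistically attain the maximum $M_n$. The first step is therefore to make this precise: show that $M_n = \max\{V_{d_n}, V_{d_n+1}\}$ with probability $\to 1$ under the conditioned measure, by a union bound over $j$ with $|j-d_n|\ge 2$ using the (earlier, auxiliary-section) generating-function / saddle-point estimates to control the conditioning, together with Poisson tail bounds.

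The second step is to remove the conditioning for the two relevant coordinates. By the local limit theorem for $\sum_{j\ge 1} jV_j$ near its mean $n$ (which is exactly of the saddle-point type recorded in Section~\ref{sec:aux}, and underlies the Moser--Wyman asymptotics quoted in the introduction), the conditional joint law of $(V_{d_n},V_{d_n+1})$ converges, after centering by $(R_{n}, W^{d_n+1}/(d_n+1)!)$ and scaling by $\sqrt{R_n}$, to the unconditional limit. Unconditionally, $V_{d_n}$ and $V_{d_n+1}$ are independent Poisson with large means, so by the CLT $(V_{d_n}-R_n)/\sqrt{R_n}\Rightarrow Z_1$ and $(V_{d_n+1}-\Eb[V_{d_n+1}])/\sqrt{\Eb[V_{d_n+1}]}\Rightarrow Z_2$ jointly, with $Z_1,Z_2$ independent. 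The conditioning contributes only a lower-order correction because the event $\{\sum jV_j=n\}$ has probability of polynomial order $1/\sqrt n$ (up to logs) and the coordinates $V_{d_n},V_{d_n+1}$ each contribute $O(\sqrt{R_n})$ to that sum, which is $o(\sqrt n)$ — so conditioning does not distort their joint limit; this tilting argument must be done carefully, presumably via the generating-function representation and a saddle-point expansion to two terms.

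The third step is bookkeeping on the shift. Write $\Eb[V_{d_n+1}] = R_n\cdot W/(d_n+1)$. On the side where $\vartheta_n = f_n$ we have $W/(d_n+1) = (d_n+f_n)/(d_n+1) = 1 - (1-f_n)/(d_n+1)$, and on the side $\vartheta_n = 1-f_n$ the relevant neighbour is $d_n$ itself versus $d_n-1$... but in all cases the larger of the two means is $R_n$ (attained at $j=d_n$) and the smaller is $R_n(1-\vartheta_n/(d_n+1)+o(\cdot))$, i.e. $R_n - R_n\vartheta_n/d_n(1+o(1))$. Hence
\[
\frac{V_{d_n+1}-R_n}{\sqrt{R_n}} = \frac{V_{d_n+1}-\Eb[V_{d_n+1}]}{\sqrt{\Eb[V_{d_n+1}]}}\cdot\sqrt{\tfrac{\Eb[V_{d_n+1}]}{R_n}} \;-\; \frac{\vartheta_n}{d_n}\cdot\frac{\sqrt{R_n}}{1} (1+o(1)),
\]
and $\sqrt{R_n}\,\vartheta_n/d_n \asymp \vartheta_n \sqrt n/\log^{7/4}n$ up to the constant $(2\pi)^{-1/4}$, using $R_n\sim (2\pi)^{-1/2}\sqrt n/\log^{3/2}n$ (to be verified via Stirling and \eqref{w}) and $d_n\sim\log n$. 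Thus $(V_{d_n+1}-R_n)/\sqrt{R_n}\Rightarrow Z_2-u$ with $u$ as in \eqref{var}-based formula, where $u=0$ in case (i), $u=\infty$ (the neighbour disappears to $-\infty$, so $\max$ collapses to $Z_1$) in case (ii), and $u\in(0,\infty)$ in case (iii). Combining with Step 1 and the continuous-mapping theorem applied to $\max\{\cdot,\cdot\}$ gives the three claimed limits. Finally, "all three cases are possible" follows by exhibiting explicit subsequences: since $\vartheta_n$ is the distance of $W(n)=\log n - \log\log n + O(\log\log n/\log n)$ to the nearest integer, its behaviour along subsequences is essentially unconstrained — one picks $n_k$ so that $W(n_k)$ is very close to an integer (case (i)), far from any integer on the scale $\log^{7/4}n/\sqrt n$ (case (ii)), or asymptotically at the critical scale with a prescribed constant (case (iii)); a short equidistribution/pigeonhole argument on the fractional parts $\{W(n)\}$ suffices.

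The main obstacle is Step~2: controlling the effect of the conditioning $\{\sum_{j\ge1}jV_j=n\}$ on the \emph{joint} law of $(V_{d_n},V_{d_n+1})$ with enough precision to see the additive shift of order $\vartheta_n\sqrt{R_n}$ — which sits below the $\sqrt{R_n}$-scale of the fluctuations only in the critical regime — rather than merely the multiplicative one. This requires a two-term saddle-point analysis of the relevant generating function (the content of Section~\ref{sec:aux}) uniform in the position of $d_n$, and a careful argument that the tilt induced by conditioning shifts the means of $V_{d_n},V_{d_n+1}$ by a negligible amount; the asymmetry between the two regimes (whether the second-largest mean is at $d_n+1$ or, when $f_n$ is small, the competition is instead between $d_n$ and $d_n-1$) also needs to be handled, which is exactly why $\vartheta_n=\min\{f_n,1-f_n\}$ and not $f_n$ appears in the statement.
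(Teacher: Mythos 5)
Your plan is essentially the paper's own proof: the Arratia--Tavar\'e conditioning is implemented rigorously via Sachkov's generating-function identity and a saddle-point (Fourier inversion) analysis showing $P\lbrb{M_n\le R_n-c\sqrt{R_n}}=\prod_{j=d_n-1}^{d_n+1}\Pbb{V_j\le R_n-c\sqrt{R_n}}+\so{1}$ (your Steps 1--2, including the reduction to the neighbors of $d_n$ and the de-conditioning), after which the Gaussian bookkeeping of $R_n-\lambda_{d_n\pm1}$ against $\sqrt{R_n}$ yields exactly your critical scale $(2\pi)^{-1/4}\vartheta_n\sqrt{n}/\log^{7/4}n$, and the attainability of the three regimes is argued from the slow variation of $f_n$ much as you suggest. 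One slip to fix: $R_n\sim n/(\sqrt{2\pi}\,\log^{3/2}n)$, not $\sqrt{n}/\log^{3/2}n$ --- with the value you wrote, your Step 3 arithmetic would give order $n^{1/4}/\log^{7/4}n$ rather than the stated (correct) $\sqrt{n}/\log^{7/4}n$.
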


 We believe that the following comments on the comparison between
 this result and the corresponding one for random integer
 partitions would be helpful.

 It is shown in \cite{M05} that the maximal part size multiplicity
 in a random integer partition of $n$, when appropriately
 normalized, converges weakly, as $n\to\infty$, to the maximum of an infinite
 sequence of independent and exponentially distributed random
 variables. Theorem 1 establishes completely different limiting
 behavior for $M_n$. The next heuristic explains this phenomenon.

 We recall that the conditioning relation, given briefly above and
 stated in detail in \cite[Chapter 2]{ABT03}
 holds in both cases of random set partitions of $[n]$ and random integer
 partitions of $n$. So, in both cases the joint distribution of
 the multiplicities is transferred to the joint
 distribution of independent random variables $\{V_j\}_{j\ge1}$,
 conditioned upon $\{\sum_{j\ge 1} jV_j=n\}$. For set partitions,
 $V_j$ are Poisson distributed variables whose means are equal to
 $\lambda_j=W^j/j!$ and tend to infinity as $n\to\infty$. For
 integer partitions, $V_j$ are geometrically distributed with parameters
 $e^{-j\pi/\sqrt{6n}}$. The last fact was first established by
 Fristedt \cite{F93}. These relationships explain the major
 difference in the multidimensional limit laws of the
 multiplicities of set partitions and integer partitions. In fact,
 Poisson distributions with growing parameters are well
 approximated by the normal law, which is shown in the set partition case by Sachkov
 \cite[Chapter IV, Theorem 4.2]{S78}, while the geometric
 distributions are asymptotically close to the exponential law
 confirmed for integer partitions by Theorems 2.1 and 2.2 in \cite{F93}. Hence, the
 limiting distribution of the maximum multiplicity in set
 partitions is based on the normal distribution, while for integer
 partitions we observe the effect of the exponential distribution.

 Furthermore, Theorem 1 shows that the limiting behavior of $M_n$
 depends on the random variable $V_{d_n}$ and its two
 closest neighbors in the sequence $\{V_j\}_{j\ge 1}$ at most. This
 property is based on the fact that, for different $j$'s, the variables $V_j$ converge to the standard
 normal random variable under different normalization depending on
 their parameters $\lambda_j$. One can easily observe the
 fast growth of the ratios $\lambda_{j+1}/\lambda_j$ for $j\le
 d_n-1$ and their fast decrease for $j\ge d_n+1$, This implies
 that the normalization of $M_n$ must only depend on the maximum
 Poisson parameter and eventually on its neighbors
   in the sequence $\{\lambda_j\}_{j\ge 1}$, i.e., on  $\lambda_{d_n}=R_n$ given by (\ref{r})
   and possibly on $\lambda_{d_n\pm 1}$. The
 participation of the fractional part $f_n$ of $W$ in Theorem 1 is due to the
 application of the Stirling's formula to the parameter
 $\lambda_{d_n}$. In the case of integer partitions we observe a
 completely different phenomenon: the multiplicity of part $j$
 multiplied by its size $j$ tends weakly to one and the same
 exponentially distributed random variable with the same normalization
 (equal to $\sqrt{6n}/\pi$) for all $j\ge 1$; see \cite[Theorems 2.1 and 2.2]{F93}. Hence all parts of a random
 integer partition contribute to the limiting behavior of the
 maximal part size in a likely manner that depends only on the
 part size $j$ itself.

 In our proof we use the saddle point method. We encounter several
 technical difficulties in its application since the underlying integrand is of the form of a product $f(x)g(x)$. This case was
 discussed in detail by Odlyzko in his survey
 \cite[p.1183] {O95}. Our integrand involves the set partition generating
 function, i.e., we have $f(x)=e^{e^x-1}$. The second factor depends
 on an extra parameter $m=m(n)$, i.e., $g(x)=g(x;m)$. The latter
 one
 remains bounded when $x$ is near to the saddle point $x=W$. We
 establish an
 asymptotic of the general type as it is given in
 \cite[formula (12.43), p. 1183]{O95}. Probabilistic and asymptotic analysis of $g(x;m)|_{x=W}$
 in turn yields the results of Theorem 1.

\section{Preliminaries}\label{sec:aux}

We start with a generating function identity for the cumulative
distribution function of the maximal block size multiplicity $M_n$
defined by (\ref{rv}).

\begin{lemma}\label{lem:gf} For any formal (complex) variable $x$ and any $m\ge 1$, we have
$$
\sum_{n=0}^\infty\frac{B_n}{n!}P(M_n\le m) x^n =e^{e^x-1} F_m(x),
$$
where, by convention, $B_0=P(M_0\le 0)=1$ and
\begin{eqnarray}\label{fm}
& & F_m(x) =\prod_{j=1}^\infty\left(1-e^{-x^j/j!}\sum_{k>m}^\infty
\left(\frac{x^{j}}{j!}\right)^k\frac{1}{k!}\right) \nonumber \\
& & =\prod_{j=1}^\infty\left(e^{-x^j/j!}\sum_{k=0}^m
\left(\frac{x^{j}}{j!}\right)^k \frac{1}{k!}\right).
\end{eqnarray}
\end{lemma}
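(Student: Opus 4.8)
The plan is to interpret the Poissonized model explicitly and read off the distribution of $M_n$ from the product structure.

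The starting point is the classical bijection (the conditioning relation of Arratia--Tavar\'{e}) between set partitions of $[n]$ and the sequence $\{V_j\}_{j\ge 1}$ of independent Poisson random variables, $V_j\sim\mathrm{Poisson}(\lambda_j)$ with $\lambda_j=x^j/j!$ (think of $x$ as a formal bookkeeping variable). Concretely, if $\mu(j)$ denotes the multiplicity of blocks of size $j$, then a partition $\sigma$ of $[n]$ contributes, in the exponential generating function $\sum_n \frac{B_n}{n!}x^n = e^{e^x-1}$, a weight that factors over block sizes as $\prod_{j\ge 1} e^{-\lambda_j}\frac{\lambda_j^{\mu(j)}}{\mu(j)!}$ after dividing by the normalization. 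So the first step is to record the identity
\begin{equation*}
\frac{B_n}{n!} = [x^n]\, e^{e^x-1} = [x^n]\prod_{j=1}^\infty e^{-x^j/j!}\sum_{k=0}^\infty\frac{(x^j/j!)^k}{k!},
\end{equation*}
which is just the product form of $e^{e^x-1}=\prod_{j\ge1}e^{x^j/j!}$, and to note that the total power of $x$ contributed by a term with $V_j=\mu(j)$ for all $j$ is $\sum_j j\mu(j)$, matching \eqref{decompo}.

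Next, I would impose the constraint $M_n\le m$, i.e. $\max_j \mu(j)\le m$, which means each $V_j$ is truncated at $m$. Replacing the inner sum $\sum_{k\ge 0}$ by $\sum_{k=0}^m$ in each factor and keeping the $e^{-x^j/j!}$ normalizer intact, the weighted count of set partitions of $[n]$ with $M_n\le m$ becomes exactly
\begin{equation*}
\frac{B_n}{n!}P(M_n\le m) = [x^n]\prod_{j=1}^\infty\left(e^{-x^j/j!}\sum_{k=0}^m\frac{(x^j/j!)^k}{k!}\right),
\end{equation*}
since dividing by $B_n/n!$ turns the weighted count into the uniform probability. Multiplying and dividing by $e^{e^x-1}=\prod_j e^{x^j/j!}$ lets me pull out the generating function $e^{e^x-1}$ and leaves the correction factor
\begin{equation*}
F_m(x)=\prod_{j=1}^\infty\left(e^{-x^j/j!}\sum_{k=0}^m\frac{(x^j/j!)^k}{k!}\right),
\end{equation*}
which is the second displayed form in \eqref{fm}; the first form follows by writing $\sum_{k=0}^m=1-\sum_{k>m}$ inside the bracket and distributing the $e^{-x^j/j!}$. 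Summing over $n$ with the convention $B_0=P(M_0\le 0)=1$ gives the claimed identity $\sum_{n\ge0}\frac{B_n}{n!}P(M_n\le m)x^n = e^{e^x-1}F_m(x)$.

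The one point requiring genuine care — the \emph{main obstacle} — is the convergence and formal manipulation of the infinite product $F_m(x)$: for fixed $m$, each factor is $1-O(x^{(m+1)j}/(j!)^{m+1})$, so the product converges absolutely for $x$ in any compact set (indeed defines an entire function), and one must check that the coefficient extraction $[x^n]$ commutes with the infinite product and with the regrouping $e^{e^x-1}F_m(x)=\prod_j(\text{truncated factor})$. This is handled by observing that only finitely many factors (those with $j\le n$) contribute to $[x^n]$ of either side, since any factor with $j>n$ equals $1+O(x^{n+1})$; so the identity reduces to a finite product identity for each fixed $n$, which is precisely the Poisson-truncation computation above. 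With that justification in place the lemma follows.
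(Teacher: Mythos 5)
Your underlying idea is the right one, and it is essentially the paper's argument: the paper simply imports the identity $\sum_{n\ge0}\frac{B_n}{n!}P(M_n\le m)x^n=\prod_{j\ge1}\sum_{k=0}^m\frac{(x^j/j!)^k}{k!}$ from Sachkov (Chapter III, formula (0.14)) and then factors out $e^{e^x-1}$, while you re-derive that identity from the count of partitions with prescribed block-size multiplicities. However, as written your two key displays are false because of the spurious Poisson normalizers $e^{-x^j/j!}$. In the first display, $\prod_{j\ge1}e^{-x^j/j!}\sum_{k\ge0}\frac{(x^j/j!)^k}{k!}=\prod_{j\ge1}e^{-x^j/j!}e^{x^j/j!}=1$, so its $n$-th coefficient is $\delta_{n,0}$, not $B_n/n!$; the factor $e^{-x^j/j!}$ is itself a power series in $x$ and cannot be carried inside a coefficient extraction as if it were a harmless normalizing constant. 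Worse, your second display asserts $\frac{B_n}{n!}P(M_n\le m)=[x^n]F_m(x)$, i.e. $\sum_{n\ge0}\frac{B_n}{n!}P(M_n\le m)x^n=F_m(x)$, which contradicts the very lemma you are proving (the correct right-hand side is $e^{e^x-1}F_m(x)$), and the subsequent step ``multiplying and dividing by $e^{e^x-1}$'' cannot repair it: with the normalizers already sitting inside the product there is nothing left to pull out, so taken literally your chain of equalities proves the wrong identity.

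The fix is to state the weighted count without the normalizers: a partition of $[n]$ with multiplicity vector $(\mu(j))_{j\ge1}$ is counted $n!\prod_j\bigl((j!)^{\mu(j)}\mu(j)!\bigr)^{-1}$ times, whence $\frac{B_n}{n!}P(M_n\le m)=[x^n]\prod_{j\ge1}\sum_{k=0}^{m}\frac{(x^j/j!)^k}{k!}$, which is exactly the Sachkov identity with $\Lambda_j=\{0,\dots,m\}$ used in the paper. Only at that point do you multiply and divide each factor by $e^{x^j/j!}$, pulling out $\prod_{j\ge1}e^{x^j/j!}=e^{e^x-1}$ and leaving $F_m(x)$ in either of its two forms. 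Your closing remarks on convergence are fine and address the only analytic point: every factor with $j>n$ is $1+\mathrm{O}(x^{n+1})$, so only finitely many factors affect $[x^n]$ and the whole computation is a legitimate formal-power-series identity.
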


\begin{proof}
 The proof is based on a general identity established in
 \cite[Chapter III, formula (0.14)]{S78}. We set therein
 $\Lambda_j=\{0,1,...,m\}$ for all $j\ge 1$ and obtain
 \begin{eqnarray}
 & & \sum_{n=0}^\infty\frac{B_n}{n!}P(M_n\le m) x^n
 \nonumber \\
 & & =\prod_{j=1}^\infty\left(\sum_{k=0}^m
\left(\frac{x^{j}}{j!}\right)^k \frac{1}{k!}\right)
=\prod_{j=1}^\infty\left(e^{x^j/j!} -\sum_{k>m}^\infty
\left(\frac{x^{j}}{j!}\right)^k\frac{1}{k!}\right) \nonumber \\
& & =\prod_{j=1}^\infty
e^{x^j/j!}\left(1-e^{-x^j/j!}\sum_{k>m}^\infty
\left(\frac{x^{j}}{j!}\right)^k\frac{1}{k!}\right) \nonumber \\
& & =e^{e^x-1}
\prod_{j=1}^\infty\left(1-e^{-x^j/j!}\sum_{k>m}^\infty
\left(\frac{x^{j}}{j!}\right)^k\frac{1}{k!}\right), \nonumber
\end{eqnarray}
which completes the proof of the lemma.
\end{proof}

Further, we apply the Cauchy coefficient formula to the generating
function identity of Lemma \ref{lem:gf} on the circle $x=W
e^{i\theta}, -\pi<\theta\le\pi$ with $W=W(n)$ determined by
(\ref{eq}) and (\ref{w}). Thus we obtain
 \begin{equation}\label{eq:integral}
    \begin{split}
    & \frac{B_n}{n!} P(M_n\leq m)=\frac{W^{-n}}{2\pi}\int_{-\pi}^{\pi} e^{e^{We^{i\theta}}-1}F_{m}\lbrb{We^{i\theta}}e^{-i\theta
    n}d\theta:=J_n.
    \end{split}
    \end{equation}

    In the asymptotic analysis of the integral $J_n$ we will use
    characteristic functions. Hence we need a more convenient representation of
    the second factor of the integrand in (\ref{eq:integral}) (see also
    (\ref{fm})). We recall that in the Introduction we defined the sequence of independent and Poisson
    distributed random variables $\{V_j\}_{j\ge 1}$ with parameters
    \begin{equation}\label{eq:r}
\begin{split}
&\lambda_j:=\frac{W^j}{j!}.
\end{split}
\end{equation}
    Further on, we denote by $\mathbb{P}$ the probability measure
    on the probability space, where the sequence $\{V_j\}_{j\ge 1}$ is defined.
     The expectation
    with respect to the probability measure $\mathbb{P}$ is denoted by $\mathbb{E}$. We also denote by
    $\ind{A}$ the indicator of an event $A$ from the same probability space.

\begin{lemma}\label{lem:F}
        We have that
        \begin{equation}\label{eq:repF}
        \begin{split}
        F_m(We^{i\theta})e^{-i\theta n}
        &=e^{e^W-e^{We^{i\theta}}}\prod_{j=1}^{\infty}\Ebb{e^{i\theta j (V_j-\lambda_j)} \ind{V_j\leq m}}\\
        &=e^{e^W-e^{We^{i\theta}}}\Ebb{e^{i\theta \sum_{j=1}^{\infty}j(V_j-\lambda_j)}\ind{\bigcap_{j=1}^\infty \curly{V_j\leq
        m}}}.
        \end{split}
        \end{equation}
            \end{lemma}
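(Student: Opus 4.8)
The plan is to start from the product formula for $F_m$ in the second line of \eqref{fm} and rewrite each factor as a probability/expectation involving the Poisson variable $V_j$. First I would observe that for a Poisson random variable $V_j$ with parameter $\lambda_j=W^j/j!$, one has $\Pb\lbrb{V_j=k}=e^{-\lambda_j}\lambda_j^k/k!=e^{-x^j/j!}(x^j/j!)^k/k!$ once we specialize $x=W$; hence the $j$-th factor of $F_m(W)$ equals $\sum_{k=0}^m\Pb\lbrb{V_j=k}=\Pb\lbrb{V_j\le m}$. To recover the general argument $x=We^{i\theta}$, I would factor out the modulus: writing $\lbrb{We^{i\theta}}^j/j!=e^{ij\theta}\lambda_j$, the $j$-th factor becomes $e^{-e^{ij\theta}\lambda_j}\sum_{k=0}^m e^{ijk\theta}\lambda_j^k/k!$. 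Multiplying and dividing by $e^{-\lambda_j}$ turns this into $e^{\lambda_j-e^{ij\theta}\lambda_j}\sum_{k=0}^m e^{ijk\theta}e^{-\lambda_j}\lambda_j^k/k! = e^{\lambda_j(1-e^{ij\theta})}\Ebb{e^{ij\theta V_j}\ind{V_j\le m}}$.

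The next step is to collect the exponential prefactors over all $j$. Since $\sum_{j\ge1}\lambda_j=\sum_{j\ge1}W^j/j!=e^W-1$ and $\sum_{j\ge1}e^{ij\theta}\lambda_j=\sum_{j\ge1}\lbrb{We^{i\theta}}^j/j!=e^{We^{i\theta}}-1$, the product of the prefactors $\prod_{j\ge1}e^{\lambda_j(1-e^{ij\theta})}$ telescopes to $e^{(e^W-1)-(e^{We^{i\theta}}-1)}=e^{e^W-e^{We^{i\theta}}}$. Thus $F_m(We^{i\theta})=e^{e^W-e^{We^{i\theta}}}\prod_{j=1}^\infty\Ebb{e^{ij\theta V_j}\ind{V_j\le m}}$. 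To get the claimed form \eqref{eq:repF}, I would then peel off the deterministic factor $e^{-i\theta j\lambda_j}$ inside each expectation: $\Ebb{e^{ij\theta V_j}\ind{V_j\le m}}=e^{ij\theta\lambda_j}\Ebb{e^{ij\theta(V_j-\lambda_j)}\ind{V_j\le m}}$, and the accumulated factor $\prod_{j\ge1}e^{ij\theta\lambda_j}=e^{i\theta\sum_{j\ge1}j\lambda_j}=e^{i\theta\sum_{j\ge1}jW^j/j!}=e^{i\theta W\sum_{j\ge1}W^{j-1}/(j-1)!}=e^{i\theta We^W}=e^{i\theta n}$, using the defining equation $We^W=n$ from \eqref{eq}. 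This $e^{i\theta n}$ cancels precisely the $e^{-i\theta n}$ on the left-hand side of \eqref{eq:repF}, yielding the first displayed equality. The second equality is just independence of the $V_j$: the product of expectations of functions of independent variables equals the expectation of the product, and $\prod_j\ind{V_j\le m}=\ind{\bigcap_j\{V_j\le m\}}$.

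The only genuine subtlety, and the step I would treat with care, is the convergence of the infinite products and sums, so that the formal manipulations above are legitimate for $x$ on the circle of radius $W$ (and a posteriori analytically in a neighborhood). Since $\lambda_j=W^j/j!\to0$ super-exponentially as $j\to\infty$, the tails $\sum_{k>m}(x^j/j!)^k/k!$ decay like $\lambda_j^{m+1}$, so $\sum_j\labsrabs{e^{-x^j/j!}\sum_{k>m}(x^j/j!)^k/k!}<\infty$, which makes the first product in \eqref{fm} (hence $F_m$) absolutely convergent; likewise $\sum_j\lambda_j(1-e^{ij\theta})$ and $\sum_j ij\theta\lambda_j$ converge absolutely. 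These estimates justify regrouping the prefactors and swapping product with sum in the exponent, and they justify rearranging $\prod_j\Ebb{\cdots}$. I expect this bookkeeping to be routine given the rapid decay of $\lambda_j$; there is no hard analytic obstacle here, only the need to state the absolute-convergence bounds once and invoke them. With that in place the lemma follows by the chain of identities described above.
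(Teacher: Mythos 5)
Your proposal is correct and follows essentially the same route as the paper: rewrite each factor of \eqref{fm} via the truncated Poisson characteristic function, collect the exponential prefactors using $\sum_j \lambda_j e^{ij\theta}=e^{We^{i\theta}}-1$ and $\sum_j j\lambda_j=We^W=n$, and pass to the single expectation by independence (the paper handles the infinite-product/a.s.-convergence point with the variance bound $\sum_j j^2\lambda_j<\infty$, matching your absolute-convergence bookkeeping).
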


    \begin{proof}
    The proof is relatively straightforward.  It relies on the independence of $\{V_{j}\}_{j\geq 1}$, the fact that
    \begin{equation}\label{eq:trun}
    \begin{split}
    &\Ebb{e^{i\theta j V_j}\ind{V_j\leq m}}=e^{-\frac{W^j}{j!}}\sum_{k=0}^me^{i\theta j k}\lbrb{\frac{W^j}{j!}}^k\frac{1}{k!},\,\,\theta\in\Rb,
    \end{split}
    \end{equation}
    and the obvious identity
    \begin{equation}\label{eq:a}
    \begin{split}
    &\sum_{j=1}^\infty j\lambda_j=\sum_{j=1}^\infty j\frac{W^j}{j!}=We^W=n.
    \end{split}
    \end{equation}
    Indeed, we have from \eqref{fm} and \eqref{eq:trun} that
    \begin{equation*}
    \begin{split}
    F_m(We^{i\theta})e^{-i\theta n}&=e^{-i\theta n}\prod_{j=1}^{\infty}\lbrb{e^{-e^{i\theta j}\frac{W^j}{j!}}\sum_{k=0}^{m}e^{i\theta j k}\lbrb{\frac{W^j}{j!}}^k\frac{1}{k!}}\\
    &=e^{-i\theta n}\prod_{j=1}^{\infty}e^{\frac{W^j}{j!}-e^{i\theta j}\frac{W^j}{j!}}\Ebb{e^{i\theta j V_j} \ind{V_j\leq m}}\\
    &=e^{e^W-e^{We^{i\theta}}}\prod_{j=1}^{\infty}\Ebb{e^{i\theta j (V_j-\lambda_j)} \ind{V_j\leq m}}\\
    &=e^{e^W-e^{We^{i\theta}}}\Ebb{e^{i\theta \sum_{j=1}^{\infty}j(V_j-\lambda_j)}\ind{\bigcap_{j=1}^\infty \curly{V_j\leq m}}},
    \end{split}
    \end{equation*}
    which confirms \eqref{eq:repF}. We only note that, for fixed
    $W$,
    $\sum_{j=1}^{\infty}j(V_j-\lambda_j)<\infty$ almost surely since
    \begin{equation*}
    \begin{split}
    &   Var\lbrb{\sum_{j=1}^{\infty}j(V_j-\lambda_j)}=\sum_{j=1}^\infty j^2\lambda_j=\sum_{j=1}^\infty j^2\frac{W^j}{j!}<\infty.
    \end{split}
    \end{equation*}
\end{proof}

We conclude this section with a decomposition of the integral in
(\ref{eq:integral}). We break the range of integration up as
follows. First, we set
\begin{equation}\label{eq:delta_n}
\delta_n:=\frac{n^{\frac{1}{7}}}{\sqrt{n\log(n)}}
\end{equation}
and
\begin{equation}\label{eq:gamma n}
\gamma_n:=\frac{1}{\log^{\frac{1}{5}}{n}}.
\end{equation}
Then, we write $(-\pi,\pi]=D_1\bigcup D_2\bigcup D_3$, where
$D_1=\{\theta:-\delta_n<\theta<\delta_n\},
D_2=\{\theta:\delta_n\le|\theta|<\gamma_n\},
D_3=\{\theta:\gamma_n\le|\theta|<\pi\}$ and
\begin{equation}\label{eq:decompo}
J_n =J_{n,1}+J_{n,2}+J_{n,3},
\end{equation}
where
\begin{equation}\label{eq:jnk}
J_{n,k}=\frac{W^{-n}}{2\pi}\int_{D_k}e^{e^{We^{i\theta}}-1}F_{m}\lbrb{We^{i\theta}}e^{-i\theta
    n}d\theta, \quad k=1,2,3.
\end{equation}
The asymptotic analysis of these three integrals will be given in
the next section.

\section{Proof of Theorem \ref{thm:main}}\label{sec:proof}
Before we state a key result for our investigations we introduce and recall some notation.
 For each $n\geq 1$, omitting where obvious the dependence on $n$, we consider a sequence of
 independently distributed random variables $\{V_j\}_{j\ge 1}$, where $V_j$ is Poisson of parameter $\lambda_j$, see \eqref{eq:r}.
Set
\begin{equation*}
\begin{split}
&\bar{V}_n:=\max_{j\geq 1}\curly{V_{j}}.
\end{split}
\end{equation*}
Recall also the definitions of
$d_n$ and $R_n$; see \eqref{d} and \eqref{r}, respectively, and
denote by $\Phi$ the cumulative distribution function of the
standard normal law. Then we have the following claim.
\begin{theorem}\label{cor:max}
    For any $c\in\Rb$, we have
    \begin{equation}\label{eq:maxIm3}
    \begin{split}
    &\abs{\Pbb{\bar{V}_n\leq R_n-c\sqrt{R_n}}-\prod_{j=d_n-1;d_n;d_n+1}\Pbb{V_{j}\leq R_n-c\sqrt{R_n}}}=\so{e^{-\frac{n}{\log^6(n)}}}\!,
    \end{split}
    \end{equation}
    and moreover,
    \begin{equation}\label{eq:maxIm4}
    \begin{split}
    &\prod_{j=d_n-1}^{d_n+1}\Pbb{V_{j}\leq R_n-c\sqrt{R_n}}\\
    &=\Phi(-c)\prod_{j=d_n-1;d_n+1}\Pbb{V_{j}\leq R_n-c\sqrt{R_n}}+\so{1}.
    \end{split}
    \end{equation}
\end{theorem}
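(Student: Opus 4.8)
The plan is to deduce both estimates from two ingredients: the independence of $\curly{V_j}_{j\ge1}$ together with sharp Poisson tail bounds, which show that, to the precision required, $\bar{V}_n$ is governed only by the three indices $d_n-1,d_n,d_n+1$; and the normal approximation to the law $\mathrm{Poisson}(R_n)$, which evaluates the central factor. To begin with \eqref{eq:maxIm3}, fix $c\in\Rb$ and put $t:=R_n-c\sqrt{R_n}$. By independence, $\Pbb{\bar{V}_n\le t}=\prod_{j\ge1}\Pbb{V_j\le t}$, and after factoring out the three factors at $j=d_n-1,d_n,d_n+1$ (whose product lies in $[0,1]$) the left side of \eqref{eq:maxIm3} is at most
\[
1-\prod_{j\notin\{d_n-1,d_n,d_n+1\}}\Pbb{V_j\le t}\ \le\ \sum_{j\notin\{d_n-1,d_n,d_n+1\}}\Pbb{V_j> t}.
\]

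So it remains to bound this sum by $\so{e^{-n/\log^6(n)}}$. I would use the Chernoff estimate $\Pbb{V_j\ge k}\le\exp\lbrb{-\tfrac{(k-\lambda_j)^2}{2k}}$ for $k>\lambda_j$, together with the cruder $\Pbb{V_j\ge k}\le 2(e\lambda_j/k)^k$ when $\lambda_j\le k/2$. Since $\lambda_{j+1}/\lambda_j=W/(j+1)$, the sequence $\curly{\lambda_j}$ is unimodal with unique maximum $\lambda_{d_n}=R_n$, and Stirling's formula gives $R_n=\frac{e^W}{\sqrt{2\pi d_n}}\lbrb{1+\bo{1/d_n}}\asymp n/\log^{3/2}(n)$ while $d_n\asymp\log(n)$. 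A short computation with $W=d_n+f_n$ shows that, uniformly in $f_n\in(0,1)$, $1-\lambda_{d_n\pm\ell}/R_n\gtrsim \ell/d_n$ for every $\ell\ge2$, hence $t-\lambda_{d_n\pm\ell}\gtrsim \ell R_n/d_n$ (the shift $c\sqrt{R_n}=\so{R_n/d_n}$ being negligible) and thus $\Pbb{V_{d_n\pm\ell}>t}\le\exp\lbrb{-c_1\ell^2 R_n/d_n^2}$; for all other $j$, where $\lambda_j$ is a small fraction of $R_n$, the crude bound gives $\Pbb{V_j>t}\le e^{-R_n/2}$, and such $j$ contribute negligibly. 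Summing the geometric series in $\ell$ and the remainder yields $\sum_{j\notin\{d_n-1,d_n,d_n+1\}}\Pbb{V_j> t}=\so{e^{-c_2 R_n/d_n^2}}=\so{e^{-c_2 n/\log^{7/2}(n)}}=\so{e^{-n/\log^6(n)}}$, because $\log^{7/2}(n)=\so{\log^6(n)}$. This proves \eqref{eq:maxIm3}.

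For \eqref{eq:maxIm4}, the product on the left exceeds the product on the right by exactly the single factor $\Pbb{V_{d_n}\le R_n-c\sqrt{R_n}}$. Since $V_{d_n}$ is Poisson with mean $\lambda_{d_n}=R_n\to\infty$, the central limit theorem for the Poisson law (equivalently the normal approximation of Sachkov \cite[Chapter IV, Theorem 4.2]{S78}) gives, for each fixed $c$, $\Pbb{V_{d_n}\le R_n-c\sqrt{R_n}}=\Pbb{(V_{d_n}-R_n)/\sqrt{R_n}\le -c}=\Phi(-c)+\so{1}$. Multiplying by $\prod_{j=d_n-1;d_n+1}\Pbb{V_j\le R_n-c\sqrt{R_n}}\in[0,1]$ and absorbing the $\so{1}$ gives \eqref{eq:maxIm4}.

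The main obstacle is the quantitative bookkeeping behind the tail estimate. One must control $R_n=W^{d_n}/d_n!$ via Stirling precisely enough to be sure that $R_n/d_n^2\gg n/\log^6(n)$, and --- this is the delicate point --- verify that the near-neighbour gaps $R_n-\lambda_{d_n\pm2}$ remain of order $R_n/d_n$ for \emph{every} fractional part $f_n\in(0,1)$, so that they neither degenerate as $f_n\to0$ or $f_n\to1$ nor are overwhelmed by the $\bo{\sqrt{R_n}}$ shift coming from $c\sqrt{R_n}$. Once these uniform estimates are in hand, the exponentially small error in \eqref{eq:maxIm3} follows, and the rest reduces to standard applications of independence, the union bound, Poisson tail bounds, and the classical central limit theorem.
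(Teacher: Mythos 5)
Your proposal is correct and follows essentially the same route as the paper: independence reduces the problem to the three indices $d_n-1,d_n,d_n+1$, a Chernoff-type Poisson tail bound (exploiting that the gaps $1-\lambda_j/H_n$ are of order at least $1/\log n$ away from these indices, uniformly in $f_n$) controls the product of the remaining factors to within $\so{e^{-n/\log^6(n)}}$, and the Poisson central limit theorem evaluates the factor at $j=d_n$ as $\Phi(-c)+\so{1}$. The only blemishes are cosmetic: the quoted crude bound $2(e\lambda_j/k)^k$ is not useful when $\lambda_j$ is as large as $k/2$ (the standard Chernoff bound already gives $e^{-\Theta(R_n)}$ in that regime), and the summation over the infinitely many far-away $j$ needs an extra word about the super-exponential decay of $\lambda_j$ (the paper handles this with a cutoff $U_n$ and Chebyshev's inequality), both easily repaired without changing the argument.
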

The proof of Theorem \ref{cor:max} relies on several intermediate technical results.
\begin{lemma}\label{lem:r}
    We have
    \begin{equation}\label{eq:maxPar}
    \begin{split}
    &R_n=\max_{j\geq 1}\curly{\lambda_j}=\lambda_{d_n}>\max_{j\geq 1;j\neq d_n}\curly{\lambda_j}.
    \end{split}
    \end{equation}
    Moreover, for any $n$, the sequence $\curly{\lambda_j}_{j\geq 1}$ strictly increases for $j\leq d_n$ and decreases afterwards.
    Finally, as $n\to\infty$,
    \begin{equation}\label{eq:Rn}
    \begin{split}
    &R_n\sim \frac{1}{\sqrt{2\pi}}\frac{n}{\log^{\frac{3}{2}}(n)}.
    \end{split}
    \end{equation}

\end{lemma}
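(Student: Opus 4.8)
The plan is to analyze the ratio $\lambda_{j+1}/\lambda_j = W/(j+1)$ directly, then feed the value at the maximizing index into Stirling's formula. First I would observe that $\lambda_{j+1}/\lambda_j = W/(j+1) > 1$ exactly when $j+1 < W$, i.e. $j \le d_n - 1$ (using $W$ is not an integer, which follows from $f_n \in (0,1)$ as declared in \eqref{f}), and $\lambda_{j+1}/\lambda_j < 1$ when $j \ge d_n$. Hence $\{\lambda_j\}$ strictly increases up to $j = d_n$ and strictly decreases thereafter, so the unique maximum is attained at $j = d_n$ with value $\lambda_{d_n} = W^{d_n}/d_n! = R_n$; this gives \eqref{eq:maxPar} and the monotonicity claim.

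For the asymptotic \eqref{eq:Rn}, I would apply Stirling's formula $d_n! \sim \sqrt{2\pi d_n}\,(d_n/e)^{d_n}$ to obtain
$$
R_n = \frac{W^{d_n}}{d_n!} \sim \frac{1}{\sqrt{2\pi d_n}}\left(\frac{We}{d_n}\right)^{d_n}.
$$
Writing $d_n = W - f_n$ I would expand $(We/d_n)^{d_n} = e^{d_n(1 - \log(d_n/W))} = e^{d_n(1 + f_n/W + O(f_n^2/W^2))\cdot(\dots)}$ more carefully: $\log(d_n/W) = \log(1 - f_n/W) = -f_n/W + O(1/W^2)$, so $d_n\log(We/d_n) = d_n + f_n + O(f_n^2/W)$, giving $(We/d_n)^{d_n} = e^{d_n + f_n + o(1)} = e^{W + o(1)}$. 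Since $We^W = n$ by \eqref{eq}, we get $e^W = n/W$, hence $e^{W + o(1)} \sim n/W$. Combined with $d_n \sim W \sim \log n$ from \eqref{w}, this yields
$$
R_n \sim \frac{1}{\sqrt{2\pi \log n}}\cdot\frac{n}{\log n} = \frac{1}{\sqrt{2\pi}}\frac{n}{\log^{3/2} n},
$$
which is \eqref{eq:Rn}.

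The main obstacle is keeping track of the error terms in the Stirling expansion: $f_n$ need not tend to any limit (indeed the whole point of Theorem \ref{thm:main} is that its behavior governs the three scenarios), so one must verify that the $f_n$-dependent factor $e^{f_n}$ — which lies in the bounded interval $(1, e)$ — does not spoil the stated $\sim$ asymptotic, i.e. that it is absorbed into a multiplicative factor that is $1 + o(1)$ only after one is careful that the claim \eqref{eq:Rn} is an asymptotic equivalence with an \emph{implicit bounded oscillating constant}. Rereading the statement, since $e^{f_n} \in (1,e)$ does not vanish, the cleanest route is to note $d_n + f_n = W$ exactly, so $d_n \log(We/d_n) = W + d_n\big(\log(We/d_n) - 1\big) + f_n$ and $\log(W/d_n) = -\log(1 - f_n/W)$; a second-order Taylor bound $0 \le -\log(1-x) - x \le x^2$ for $x = f_n/W \in (0,1/W)$ shows $d_n\log(W/d_n) = f_n + O(f_n^2/W) = f_n + o(1)$, so the exponent is $W + o(1)$ with the $f_n$ contribution cancelling against the $-d_n \cdot 1$ and the $W = d_n + f_n$ bookkeeping exactly; thus the only surviving non-vanishing factor is $e^{o(1)} \to 1$, and \eqref{eq:Rn} holds as stated. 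I would present this cancellation explicitly as it is the one genuinely delicate point.
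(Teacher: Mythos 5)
Your proof is correct and follows essentially the same route as the paper: the ratio test on $\lambda_{j+1}/\lambda_j$ (your $W/(j+1)$ is in fact the correct ratio, with the same monotonicity conclusion) for \eqref{eq:maxPar}, and Stirling's formula for \eqref{eq:Rn}, where your careful cancellation $d_n\log(We/d_n)=d_n+f_n+o(1)=W+o(1)$ is exactly the paper's step $\lbrb{1+f_n/d_n}^{d_n}e^{d_n}\sim e^{f_n}e^{d_n}=e^{W}$, so the $f_n$-dependent factor is absorbed exactly and causes no trouble.
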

\begin{proof}
   We notice that
    $\frac{\lambda_{j+1}}{\lambda_j}=\frac{W}{j},$ which since $W=W(n)$ is transcendental,
     strictly exceeds $1$ if $j\leq d_n$ and is smaller than $1$ if $j\geq d_n+1$, see \eqref{d} for the definition of $d_n$.
      Thus, \eqref{eq:maxPar} and the claim that succeeds it follow. For the final claim we use the well-known Stirling asymptotic
    \begin{equation}\label{eq:Stir}
    \begin{split}
    &n!\sim \sqrt{2\pi}n^{n+\frac12}e^{-n}
    \end{split}
    \end{equation}
    to get
    \begin{equation*}
    \begin{split}
    &R_n=\frac{W^{d_n}}{d_n!}\sim \lbrb{\frac{W}{d_n}}^{d_n}\frac{e^{d_n}}{\sqrt{2\pi d_n}}=\lbrb{1+\frac{f_n}{d_n}}^{d_n}\frac{e^{d_n}}{\sqrt{2\pi d_n}}\\
    &\sim \frac{e^{W}}{\sqrt{2\pi W}}=\frac{n}{\sqrt{2\pi }W^{\frac32}}\sim \frac{1}{\sqrt{2\pi}}\frac{n}{\log^{\frac{3}{2}}(n)},
    \end{split}
    \end{equation*}
    where $f_n$ is defined in \eqref{f}, \eqref{eq} was employed for the last identity and \eqref{w} for the very last asymptotic relation.
    This concludes the proof.
\end{proof}
 Lemma \ref{lem:r} suggests that $V_{d_n}$ plays important role for the overall behavior of $\bar{V}_n$. From now on we use $\stackrel{d}{=}$ for identity in distribution between two random variables. Since it is a classical result that
\begin{equation}\label{eq:Norm}
\begin{split}
&\limi{n}\frac{V_{d_n}-R_n}{\sqrt{R_n}}\stackrel{d}{=}Z_1
\end{split}
\end{equation}
we proceed to investigate whether and
when the other Poisson random variables scale in the same fashion
with $R_n$. We have the following preliminary result.
\begin{lemma}\label{lem:scale}
    For any $c\in \Rb$, we have that
    \begin{equation}\label{eq:see1}
    \begin{split}
    &\frac{R_n-\lambda_{d_n+1}}{\sqrt{\lambda_{d_n+1}}}-c\frac{\sqrt{R_n}}{\sqrt{\lambda_{d_n+1}}}= \sqrt{R_n}\frac{1-f_n}{W(n)}\lbrb{1+\so{1}}-c+\so{1},\\
    &\frac{R_n-\lambda_{d_n-1}}{\sqrt{\lambda_{d_n-1}}}-c\frac{\sqrt{R_n}}{\sqrt{\lambda_{d_n-1}}}=
    \sqrt{R_n}\frac{f_n}{W(n)}\lbrb{1+\so{1}}-c+\so{1}.
    \end{split}
    \end{equation}
    Moreover, for any sequence $\curly{j_n}_{n\geq 1}$ such that $j_n\notin\curly{d_n-1,d_n,d_n+1}$, we have that
    \begin{equation}\label{eq:see3}
    \begin{split}
    &\limi{n}\lbrb{\frac{R_n-\lambda_{j_n}}{\sqrt{\lambda_{j_n}}}-c\frac{\sqrt{R_n}}{\sqrt{\lambda_{j_n}}}}=\infty.
    \end{split}
    \end{equation}
\end{lemma}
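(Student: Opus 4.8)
The plan is to obtain the two identities in \eqref{eq:see1} by an exact algebraic rewriting combined with the asymptotics \eqref{w} and \eqref{eq:Rn}, and to deduce the divergence \eqref{eq:see3} from the unimodality of $\curly{\lambda_j}_{j\geq1}$ (Lemma \ref{lem:r}) together with the growth $\sqrt{R_n}/W(n)\to\infty$. For \eqref{eq:see1}, using \eqref{eq:r} and $W=d_n+f_n$ (see \eqref{d}, \eqref{f}) I would first record the exact identities
\begin{equation*}
\frac{R_n-\lambda_{d_n+1}}{R_n}=1-\frac{W}{d_n+1}=\frac{1-f_n}{d_n+1},\qquad \frac{R_n-\lambda_{d_n-1}}{R_n}=1-\frac{d_n}{W}=\frac{f_n}{W(n)}.
\end{equation*}
Since $d_n\to\infty$ and $f_n\in(0,1)$, the ratios $\lambda_{d_n+1}/R_n=W/(d_n+1)$ and $\lambda_{d_n-1}/R_n=d_n/W$ both tend to $1$, so $\sqrt{\lambda_{d_n\pm1}}=\sqrt{R_n}\lbrb{1+\so{1}}$ and $d_n+1=W(n)\lbrb{1+\so{1}}$ (while $d_n+f_n=W(n)$ exactly). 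Dividing the two displayed identities by $\sqrt{\lambda_{d_n\pm1}}$ and noting $c\sqrt{R_n}/\sqrt{\lambda_{d_n\pm1}}=c+\so{1}$ (recall $c$ is fixed) yields \eqref{eq:see1} termwise.

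For \eqref{eq:see3}, fix a sequence $\curly{j_n}_{n\geq1}$ with $j_n\notin\curly{d_n-1,d_n,d_n+1}$ and write the displayed expression as $\lbrb{R_n-\lambda_{j_n}-c\sqrt{R_n}}/\sqrt{\lambda_{j_n}}$. By the unimodality of $\curly{\lambda_j}_{j\geq1}$ proved in Lemma \ref{lem:r}, for all large $n$ (so that $d_n\geq3$) one has $\lambda_{j_n}\leq\max\curly{\lambda_{d_n-2},\lambda_{d_n+2}}\leq R_n$ for every such index. A short computation with $\lambda_{d_n+2}/R_n=\frac{W^2}{(d_n+1)(d_n+2)}$ and $\lambda_{d_n-2}/R_n=\frac{(d_n-1)d_n}{W^2}$, using $W=d_n+f_n$, shows that $1-\lambda_{d_n\pm2}/R_n\geq c_0/W(n)$ for an absolute constant $c_0>0$ and all large $n$; hence $R_n-\lambda_{j_n}\geq c_0R_n/W(n)$ uniformly over all admissible $j_n$. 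Since $\sqrt{\lambda_{j_n}}\leq\sqrt{R_n}$ and, by \eqref{eq:Rn} and \eqref{w}, $\sqrt{R_n}/W(n)\sim(2\pi)^{-1/4}\sqrt{n}/\log^{7/4}(n)\to\infty$, the numerator $R_n-\lambda_{j_n}-c\sqrt{R_n}$ is positive for all large $n$ uniformly in $\curly{j_n}$, and therefore
\begin{equation*}
\frac{R_n-\lambda_{j_n}-c\sqrt{R_n}}{\sqrt{\lambda_{j_n}}}\ \geq\ \frac{c_0R_n/W(n)-c\sqrt{R_n}}{\sqrt{R_n}}\ =\ c_0\,\frac{\sqrt{R_n}}{W(n)}-c\ \longrightarrow\ \infty
\end{equation*}
as $n\to\infty$, which gives \eqref{eq:see3}.

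The step I expect to be the main obstacle is the uniformity over arbitrary sequences $\curly{j_n}$ in \eqref{eq:see3}: one cannot reduce to a single limiting ratio, so the unimodality of $\curly{\lambda_j}_{j\geq1}$ must be used to pass to the extremal indices $j_n=d_n\pm2$, after which the quantitative gap $R_n-\lambda_{d_n\pm2}\gtrsim R_n/W(n)$ played against $\sqrt{R_n}/W(n)\to\infty$ forces the divergence no matter how $\curly{j_n}$ is chosen. In contrast, \eqref{eq:see1} is routine once the exact identities above are in place.
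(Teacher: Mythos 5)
Your proposal is correct and follows essentially the same route as the paper: the exact identities $\lambda_{d_n+1}=R_nW/(d_n+1)$, $\lambda_{d_n-1}=R_nd_n/W$ with $W=d_n+f_n$ for \eqref{eq:see1}, and for \eqref{eq:see3} a reduction via the unimodality of $\curly{\lambda_j}_{j\geq1}$ (Lemma \ref{lem:r}) to the extremal indices $d_n\pm2$, where $\sqrt{R_n}/W\to\infty$ forces divergence. The only (harmless) difference is organizational: you extract a uniform quantitative gap $1-\lambda_{d_n\pm2}/R_n\geq c_0/W$ before comparing, whereas the paper first computes the $d_n\pm2$ limits explicitly and then passes to general $j_n$ by the same monotonicity comparison; in fact your ratio $\lambda_{d_n-2}/R_n=d_n(d_n-1)/W^2$ corrects a small slip in the paper's corresponding display.
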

\begin{proof}
    Consider first $\lambda_{d_n+1}$. Note that
    \begin{equation}\label{eq:rec}
    \begin{split}
    \lambda_{d_n+1}=\frac{W^{d_n+1}}{(d_n+1)!}=R_n\frac{W}{d_n+1}
    \end{split}
    \end{equation}
    and since $d_n<W<d_n+1$ then it follows that
    \begin{equation}\label{eq:ration}
    \begin{split}
    &\limi{n}\frac{\lambda_{d_n+1}}{R_n}=1.
    \end{split}
    \end{equation}
    We then have
    \begin{equation*}
    \begin{split}
    &\frac{R_n-\lambda_{d_n+1}}{\sqrt{\lambda_{d_n+1}}}-c\frac{\sqrt{R_n}}{\sqrt{\lambda_{d_n+1}}}= R_n\frac{1-\frac{W}{d_n+1}}{\sqrt{R_n\frac{W}{d_n+1}}}-c+\so{1}\\
    &=\sqrt{R_n}\lbrb{1-\frac{W}{d_n+1}}\lbrb{1+\so{1}}-c+\so{1}\\
    &=\sqrt{R_n}\frac{1-f_n}{W}\lbrb{1+\so{1}}-c+\so{1}.
    \end{split}
    \end{equation*}
    The second claim of \eqref{eq:see1} follows in the same fashion. Let $j_n=d_{n}+2$. We demonstrate that \eqref{eq:see3} holds since
    \begin{equation*}
    \begin{split}
    &   \limi{n}\lbrb{\frac{R_n-\lambda_{d_n+2}}{\sqrt{\lambda_{d_n+2}}}-c\frac{\sqrt{R_n}}{\sqrt{\lambda_{d_n+2}}}}\\
    &= \limi{n}\lbrb{\frac{R_n}{\sqrt{\lambda_{d_n+2}}}\lbrb{1-\frac{W^2}{(d_n+1)(d_n+2)}}-c\frac{\sqrt{(d_n+1)(d_n+2)}}{\sqrt{W^2}}}\\
    &=   \limi{n}\lbrb{\frac{\sqrt{R_n}}{(d_n+1)(d_n+2)} \lbrb{(d_n+1)(d_n+2)-W^2}}-c\\
    &=      \limi{n}\lbrb{\frac{\sqrt{R_n}}{(d_n+1)(d_n+2)}\lbrb{3d_n+2-2d_nf_n-f^2_n}}-c\\
    &= \limi{n}\lbrb{\frac{\sqrt{R_n}}{W^2}\lbrb{(3-2f_n)W(n)+2-f^2_n}}-c=\infty,
    \end{split}
    \end{equation*}
    where we have used \eqref{eq:Rn} in the very last relationship.
    Let us next consider the case $j_n=d_{n}-2$. Then
    \begin{equation*}
    \begin{split}
    &   \limi{n}\lbrb{\frac{R_n-\lambda_{d_n-2}}{\sqrt{\lambda_{d_n-2}}}-c\frac{\sqrt{R_n}}{\sqrt{\lambda_{d_n-2}}}}\\
    &=\limi{n}\lbrb{\frac{R_n}{\sqrt{\lambda_{d_n-2}}}\lbrb{1-\frac{(d_n-1)(d_n-2)}{W^2}}-c\frac{W}{\sqrt{(d_n-1)(d_n-2)}}}\\
    &= \limi{n}\lbrb{\frac{\sqrt{R_n}}{W^2}\lbrb{W^2-(d_n-1)(d_n-2)}}-c \\
    &= \limi{n}\lbrb{     \frac{\sqrt{R_n}}{W^2}\lbrb{3d_n+2d_nf_n+f^2_n-2}}-c\\
    &= \limi{n}\lbrb{\frac{\sqrt{R_n}}{W^2}\lbrb{(3+2f_n)W-2+f^2_n}}-c=\infty,
    \end{split}
    \end{equation*}
    where we have used again \eqref{eq:Rn} in the very last relationship. Finally, consider $\curly{j_n}_{n\geq 1}$ such that $j_n\notin\curly{d_n-1,d_n,d_n+1}$ for all $n$ big enough.  
     Let first $j_n\leq d_n-2$. Then since $\lambda_{j_n}\leq \lambda_{d_n-2}$, see Lemma \ref{lem:r}, we have that
    \begin{equation}\label{eq:n}
    \begin{split}
    &       \frac{R_n-\lambda_{j_n}}{\sqrt{\lambda_{j_n}}}-c\frac{\sqrt{R_n}}{\sqrt{\lambda_{j_n}}}= \frac{1}{\sqrt{\lambda_{j_n}}}\lbrb{R_n-\lambda_{j_n}-c\sqrt{R_n}}\\
    &\geq \frac{1}{\sqrt{\lambda_{j_n}}}\lbrb{R_n-\lambda_{d_n-2}-c\sqrt{R_n}}\\
    &=\frac{\sqrt{\lambda_{d_n-2}}}{\sqrt{\lambda_{j_n}}}\lbrb{\frac{R_n-\lambda_{d_n-2}}{\sqrt{\lambda_{d_n-2}}}-c\frac{\sqrt{R_n}}{\sqrt{\lambda_{d_n-2}}}}\\
    &\geq \lbrb{\frac{R_n-\lambda_{d_n-2}}{\sqrt{\lambda_{d_n-2}}}-c\frac{\sqrt{R_n}}{\sqrt{\lambda_{d_n-2}}}}.
    \end{split}
    \end{equation}
   Then, we can get from the arguments above that
    \begin{equation*}
    \begin{split}
    & \limi{n}\frac{R_n-\lambda_{j_n}}{\sqrt{\lambda_{j_n}}}-c\frac{\sqrt{R_n}}{\sqrt{\lambda_{j_n}}}=\infty.
    \end{split}
    \end{equation*}
     The case $j_n\geq d_n+2$ is dealt with precisely as in \eqref{eq:n} since  $\lambda_{j_n}\leq \lambda_{d_n+2}$ from Lemma \ref{lem:r}. This concludes the claim.
\end{proof}

Lemma \ref{lem:scale} suggests that the distribution
of $\bar{V}_n$ may turn out to
depend at most on $V_{d_n-1},V_{d_n}, V_{d_n+1}$. We start
investigating this by recording the immediate formula
\begin{equation}\label{eq:maxIm}
\begin{split}
&\Pbb{\bar{V}_n\leq R_n-c\sqrt{R_n}}=\prod_{j=1}^{\infty}\Pbb{V_j\leq R_n-c\sqrt{R_n}},
\end{split}
\end{equation}
which is valid for any
$c\in \Rb$.
We proceed to study the infinite product without the terms $j\in\curly{d_n-1,d_n,d_n+1}$. We write, for any $c\in\Rb$,
\begin{equation*}
\begin{split}
&H_n:=R_n-c\sqrt{R_n}
\end{split}
\end{equation*}
and derive the following  property  of $H_n$.
\begin{lemma}\label{lem:H}
    For any $c\in\Rb$, there exists $n(c)$ such that, for $n\geq n(c)$,
    \begin{equation}\label{eq:prop}
    \begin{split}
    &\min_{j\neq d_n-1,d_n,d_n+1}\frac{H_n}{\lambda_{j}}=\min\curly{\frac{H_n}{\lambda_{d_n-2}};\frac{H_n}{\lambda_{d_n+2}}}>1.
    \end{split}
    \end{equation}
\end{lemma}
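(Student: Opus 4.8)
The plan is to derive \eqref{eq:prop} from two facts already in hand: the strict unimodality of $\{\lambda_j\}_{j\ge 1}$ from Lemma \ref{lem:r}, and the divergence \eqref{eq:see3} of Lemma \ref{lem:scale} specialised to the indices $d_n\pm 2$. No genuine new analysis is needed; the only point requiring a moment's care is the reduction "minimiser of $H_n/\lambda_j$ = maximiser of $\lambda_j$", which is legitimate only because $H_n>0$ for large $n$.

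First I would record the easy preliminaries. By \eqref{eq:Rn} we have $R_n\to\infty$, so $H_n=R_n-c\sqrt{R_n}=\sqrt{R_n}\lbrb{\sqrt{R_n}-c}$ is strictly positive for all $n$ large enough (in a way depending on $c$); moreover $d_n=\lfloor W(n)\rfloor\to\infty$ by \eqref{w}, so eventually $d_n\ge 3$ and in particular the index $d_n-2$ is admissible. Restrict attention to such $n$. Since $H_n>0$ and every $\lambda_j>0$, the map $j\mapsto H_n/\lambda_j$ is strictly decreasing in $\lambda_j$, hence it is minimised exactly where $\lambda_j$ is maximised. By Lemma \ref{lem:r} the sequence $\{\lambda_j\}_{j\ge 1}$ strictly increases for $j\le d_n$ and strictly decreases afterwards, so once the three central indices $d_n-1,d_n,d_n+1$ are removed the largest remaining value is $\max\curly{\lambda_{d_n-2},\lambda_{d_n+2}}$. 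Dividing $H_n>0$ by this maximum gives
\[
\min_{j\neq d_n-1,d_n,d_n+1}\frac{H_n}{\lambda_j}=\frac{H_n}{\max\curly{\lambda_{d_n-2},\lambda_{d_n+2}}}=\min\curly{\frac{H_n}{\lambda_{d_n-2}};\frac{H_n}{\lambda_{d_n+2}}},
\]
which is the first equality in \eqref{eq:prop}.

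It then remains to check $H_n/\lambda_{d_n-2}>1$ and $H_n/\lambda_{d_n+2}>1$, i.e. that $\frac{H_n-\lambda_{d_n\pm 2}}{\sqrt{\lambda_{d_n\pm 2}}}>0$. But this quantity equals $\frac{R_n-\lambda_{d_n\pm 2}}{\sqrt{\lambda_{d_n\pm 2}}}-c\frac{\sqrt{R_n}}{\sqrt{\lambda_{d_n\pm 2}}}$, which, taking the constant sequences $j_n\equiv d_n+2$ and $j_n\equiv d_n-2$ in \eqref{eq:see3} (both of these cases are carried out explicitly inside the proof of Lemma \ref{lem:scale}), tends to $+\infty$ as $n\to\infty$. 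Hence both ratios exceed $1$ for all $n$ beyond some threshold $n(c)$; taking $n(c)$ large enough to also secure $H_n>0$ and $d_n\ge 3$ completes the proof of \eqref{eq:prop}.

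I do not expect a real obstacle: all the substantive work sits in Lemmas \ref{lem:r} and \ref{lem:scale}, and Lemma \ref{lem:H} is essentially their bookkeeping consequence. The only delicate step is the sign of $H_n$, which is exactly where \eqref{eq:Rn} enters to justify restricting to large $n$.
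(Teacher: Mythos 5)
Your proof is correct, and its first half is exactly the paper's: the reduction of the minimum to the two indices $d_n\pm 2$ via the strict unimodality of $\{\lambda_j\}_{j\ge 1}$ from Lemma \ref{lem:r}. You are in fact more careful than the paper here, since you flag that this reduction needs $H_n>0$ (and $d_n\ge 3$), both secured for $n\ge n(c)$ by \eqref{eq:Rn} and \eqref{w}, a point the paper passes over silently. Where you genuinely diverge is the inequality: the paper proves $H_n/\lambda_{d_n\pm 2}>1$ by a self-contained chain of equivalent algebraic inequalities (writing $\lambda_{d_n\pm2}/R_n$ in terms of $W$ and $d_n$, substituting $W=d_n+f_n$, and using that $cR_n^{-1/2}d_n^2\to 0$), whereas you note that $\lbrb{H_n-\lambda_{d_n\pm 2}}/\sqrt{\lambda_{d_n\pm 2}}$ is precisely the quantity shown to diverge to $+\infty$ in \eqref{eq:see3} of Lemma \ref{lem:scale}, specialised to $j_n=d_n\pm 2$ (the two cases worked out explicitly in that lemma's proof), so positivity for all large $n$ is immediate. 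There is no circularity, since Lemma \ref{lem:scale} precedes Lemma \ref{lem:H} and does not rely on it. Your route buys brevity and avoids recomputing essentially the same expansion twice; the paper's explicit computation keeps the lemma self-contained and exhibits the actual size of the margin by which the ratio exceeds $1$ (of order $1/W$), which foreshadows the quantitative bounds \eqref{eq:T1}--\eqref{eq:u} used in Proposition \ref{prop:est}. Either argument is acceptable.
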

\begin{proof}
    Indeed, identity \eqref{eq:prop} is always true as $\lambda_{j}$ strictly increases for $j\leq d_n$ and strictly decreases afterwards, see Lemma \ref{lem:r}. For the inequality in \eqref{eq:prop} we note that
    \begin{equation*}
    \begin{split}
    &   \frac{H_n}{\lambda_{d_n+2}}=\frac{R_n-c\sqrt{R_n}}{R_n\frac{W^2}{(d_n+1)(d_n+2)}}=\frac{1-cR^{-1/2}_n}{\frac{W^2}{(d_n+1)(d_n+2)}}    >1\\
    &\iff   (d_n+1)(d_n+2)(1-cR^{-1/2}_n)>W^2\\
    &\iff (d^2_n+3d_n+2)(1-cR^{-1/2}_n)>d^2_n+2d_n+f^2_n\\
    &\iff d^2_n+3d_n+2-cR^{-1/2}_n(d^2_n+3d_n+2)>d^2_n+2d_nf_n+f^2_n\\
    &\iff 3d_n-2d_nf_n+2-cR^{-1/2}_n(d^2_n+3d_n+2)>f^2_n,
    \end{split}
    \end{equation*}
    where each successive implication involves a rearrangement of the initial inequality and an application of $W=f_n+d_n$ and of \eqref{eq:Rn}. The very last inequality is valid for all $n$ large enough from \eqref{eq:Rn} and $d_n=\lfloor W\rfloor\sim \log(n),$ see \eqref{w}, and $f_n\in\lbrb{0,1}$. Similarly one can check that
    \begin{equation*}
    \begin{split}
    &   \frac{H_n}{\lambda_{d_n-2}}=W^2\frac{R_n-c\sqrt{R_n}}{R_n(d_n-1)(d_n-2)}>1\\
    &\iff   W^2(1-cR^{-1/2}_n)>(d_n-1)(d_n-2)\\
    &\iff (d^2_n+2f_nd_n+f_n^2)(1-cR^{-1/2}_n)>d^2_n-3d_n+2\\
    &\iff 2f_nd_n+3d_n+f_n^2-cR^{-1/2}_n(d^2_n+2f_nd_n+f_n^2)>2.
    \end{split}
    \end{equation*}
   This concludes the proof of the claim.
\end{proof}
To simplify \eqref{eq:maxIm} we make some preliminary estimates regarding the terms in \eqref{eq:maxIm}.
\begin{prop}\label{prop:est}
    For any $c\in\Rb$ with $H_n=R_n-c\sqrt{R_n}$, we have
    \begin{equation}\label{eq:sum3}
    \begin{split}
    \limi{n}&\sum_{j=1;j\neq d_n-1,d_n,d_n+1}^\infty\Pbb{V_j>H_n}=0;\\
    &\sum_{j=1;j\neq d_n-1,d_n,d_n+1}^\infty\Pbb{V_j>H_n}=\so{e^{-\frac{n}{\log^6(n)}}}.
    \end{split}
    \end{equation}
\end{prop}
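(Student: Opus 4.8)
The plan is to dominate the sum by the elementary Chernoff bound for Poisson tails. By Lemma~\ref{lem:H}, once $n\ge n(c)$ every index $j\notin\curly{d_n-1,d_n,d_n+1}$ satisfies $\lambda_j<H_n$, so $\Pbb{V_j>H_n}$ is a genuine upper-tail deviation; optimising $e^{-tH_n}\Ebb{e^{tV_j}}$ over $t>0$ gives
\[
\Pbb{V_j>H_n}\le\Pbb{V_j\ge H_n}\le e^{-g(\lambda_j)},\qquad g(\lambda):=H_n\ln\tfrac{H_n}{\lambda}-H_n+\lambda=\lambda\,\phi\!\lbrb{\tfrac{H_n}{\lambda}},
\]
with $\phi(r):=r\ln r-r+1$. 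Two facts are used throughout: $g$ is strictly decreasing on $(0,H_n)$, since $g'(\lambda)=1-H_n/\lambda<0$; and $\phi(r)\ge (r-1)^2/(2r)$ for $r\ge1$, an elementary one-variable inequality.

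Set $\lambda^*_n:=\max\curly{\lambda_{d_n-2},\lambda_{d_n+2}}$. By the unimodality of $\curly{\lambda_j}_{j\ge1}$ (Lemma~\ref{lem:r}) we have $\lambda_j\le\lambda^*_n<H_n$ for every $j\notin\curly{d_n-1,d_n,d_n+1}$, whence $g(\lambda_j)\ge g(\lambda^*_n)$. I would split the sum at $j=\lceil2W\rceil$, which exceeds $d_n+2$ for $n$ large since $W\sim\log n$. On the finite range $\curly{1,\dots,\lceil2W\rceil}\setminus\curly{d_n-1,d_n,d_n+1}$ there are $\bo{\log n}$ indices, each contributing $\le e^{-g(\lambda^*_n)}$, so this part is $\le\bo{\log n}\,e^{-g(\lambda^*_n)}$. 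On the tail range $j>\lceil2W\rceil$ one has $\lambda_{j+1}/\lambda_j=W/(j+1)\le\tfrac12$, so that $g(\lambda_{j+1})-g(\lambda_j)=H_n\ln\tfrac{\lambda_j}{\lambda_{j+1}}+\lambda_{j+1}-\lambda_j\ge H_n\ln2-\lambda_j\ge\tfrac12H_n\ln2$ for $n$ large (because $\lambda_j\le\lambda_{\lceil2W\rceil}=\so{H_n}$ there, by Stirling); the tail is then dominated by a geometric series of ratio $e^{-H_n\ln2/2}$, hence is $\le 2e^{-H_n\ln2/2}$, and since $H_n\sim R_n$ is of order $n/\log^{3/2}(n)$ this is already $\so{e^{-n/\log^6(n)}}$ with great margin.

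What remains is to show $g(\lambda^*_n)$ beats $n/\log^6(n)$. Since $\lambda_{d_n\pm2}/R_n\to1$ we have $H_n/\lambda_{d_n\pm2}\to1$, so $\phi(r)\ge(r-1)^2/(2r)$ gives $g(\lambda_{d_n\pm2})\ge\tfrac13\big((H_n-\lambda_{d_n\pm2})/\sqrt{\lambda_{d_n\pm2}}\big)^2$ for $n$ large. But $(H_n-\lambda_{d_n\pm2})/\sqrt{\lambda_{d_n\pm2}}$ is exactly the expression estimated inside the proof of Lemma~\ref{lem:scale}; carrying out that computation (with $\lambda_{d_n+2}=R_nW^2/((d_n+1)(d_n+2))$, and similarly for $\lambda_{d_n-2}$, together with $W=d_n+f_n$) shows it equals $(1+\so{1})$ times $(3-2f_n)\sqrt{R_n}/W$, respectively $(1+2f_n)\sqrt{R_n}/W$; since $3-2f_n,1+2f_n\in(1,3)$ and $W<\log n$, both exceed $c_1\sqrt{R_n}/\log(n)$ for $n$ large, for some fixed $c_1>0$. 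Hence, using $R_n\sim n/(\sqrt{2\pi}\log^{3/2}(n))$ from Lemma~\ref{lem:r}, $g(\lambda^*_n)\ge c_2\,n/\log^{7/2}(n)$ for some fixed $c_2>0$ and all $n$ large. Putting the two pieces together, the whole sum is $\le\bo{\log n}\,e^{-c_2 n/\log^{7/2}(n)}$, and since $\big(n/\log^{7/2}(n)\big)/\big(n/\log^6(n)\big)=\log^{5/2}(n)\to\infty$ this is $\so{e^{-n/\log^6(n)}}$, which is the second displayed assertion; the first follows because $e^{-n/\log^6(n)}\to0$.

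The delicate point is the sharpness demanded by the second estimate. The naive bound $\Pbb{V_j>H_n}\le\Pbb{V_j\ge1}\le\lambda_j$, summed over the large-$j$ range, yields only $\bo{1/\sqrt{\log n}}$ — nowhere near $\so{e^{-n/\log^6(n)}}$ — so one is forced to use the true Poisson deviation rate $e^{-g(\lambda_j)}$ for \emph{every} $j$, and in particular to locate $g(\lambda_{d_n\pm2})$ to the correct power of $\log n$. This is precisely where the sharp asymptotics $R_n\sim n/(\sqrt{2\pi}\log^{3/2}(n))$, $W\sim\log n$, and the second-neighbour computations of Lemma~\ref{lem:scale} are indispensable; the geometric control of the (genuinely infinite) far tail, though routine, also cannot be skipped.
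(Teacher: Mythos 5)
Your proof is correct, and its engine is the same as the paper's: the exponential Chernoff/Markov bound for the Poisson upper tail (your $e^{-g(\lambda_j)}$ is exactly the paper's $e^{H_n(u_j+\ln(1-u_j))}$ with $u_j=1-\lambda_j/H_n$), with the bottleneck located at the second neighbours $d_n\pm2$ and controlled through Lemma \ref{lem:H} and the computations of Lemma \ref{lem:scale}. Where you genuinely differ is in how the infinite sum is organised. The paper bounds the exponent uniformly from below via $u_j\ge\tfrac{1}{2\log n}$ (its estimates \eqref{eq:T1}--\eqref{eq:T2}), splits at the huge cutoff $U_n=\lceil e^{Cn/(4\log^5 n)}\rceil$, pays a factor $U_n$ on the near range, and treats $j>U_n$ by the first-moment bound $\Pbb{V_j>H_n}\le\lambda_j/H_n$ together with the geometric decay of $\lambda_j$. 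You instead split at $\lceil 2W\rceil$, so the near range has only $\bo{\log n}$ terms, each dominated by $e^{-g(\lambda^*_n)}$ with $\lambda^*_n=\max\curly{\lambda_{d_n-2},\lambda_{d_n+2}}$ (monotonicity of $g$ plus the unimodality in Lemma \ref{lem:r}), and on the far range you let the Chernoff exponents themselves increase by at least $\tfrac12 H_n\ln 2$ per step, summing a geometric series. Your route yields the sharper exponent of order $n/\log^{7/2}(n)$ (via $\phi(r)\ge(r-1)^2/(2r)$) versus the paper's $n/\log^4(n)$ and avoids the artificial cutoff $U_n$; the paper's route avoids your per-step estimate of $g$ along the tail. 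Two immaterial blemishes: for $j=d_n-2$ the factor is $3+2f_n$, not $1+2f_n$ (compare Lemma \ref{lem:scale}), which only strengthens your bound; and the claim that the far tail is $\le 2e^{-H_n\ln 2/2}$ implicitly uses $g(\lambda_{\lceil 2W\rceil+1})\ge\tfrac12 H_n\ln 2$, which indeed holds since $H_n/\lambda_{\lceil 2W\rceil+1}\to\infty$ polynomially (alternatively, bounding the first tail term by $e^{-g(\lambda^*_n)}$ also suffices).
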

\begin{proof}
    We choose $n(c)$ so that \eqref{eq:prop} is valid for any $n\geq n(c)$.  Set $u_{j}=1-\frac{\lambda_j}{H_n}$ and from \eqref{eq:prop}, we arrive at
    \begin{equation*}
    \begin{split}
    \min_{j\neq d_n-1,d_n,d_n+1}u_{j}&=1-\max\curly{\frac{\lambda_{d_n-2}}{H_n};\frac{\lambda_{d_n+2}}{H_n}}\\
    &=\min\curly{1-\frac{\lambda_{d_n-2}}{H_n};1-\frac{\lambda_{d_n+2}}{H_n}}>0.
    \end{split}
    \end{equation*}
    We study the two terms in the last identity above. We have that
    \begin{equation}\label{eq:T1}
    \begin{split}
    &1-\frac{\lambda_{d_n+2}}{H_n}=\frac{1-\frac{c}{\sqrt{R_n}}-\frac{W^2}{(d_n+1)(d_n+2)}}{1-\frac{c}{\sqrt{R_n}}}\\
    &= 1-\frac{W^2}{(d_n+1)(d_n+2)}+\bo{\frac{\log^{\frac{3}{4}}(n)}{n^{\frac12}}}\\
    &= \frac{3d_n-2d_nf_n+2-f^2_n}{d^2_n}\lbrb{1+\bo{\frac{1}{d_n}}}+\bo{\frac{\log^{\frac{3}{4}}(n)}{n^{\frac12}}}\\
    &=\frac{3-2f_n+\so{1}}{d_n}\lbrb{1+\bo{\frac{1}{d_n}}}+\bo{\frac{\log^{\frac{3}{4}}(n)}{n^{\frac12}}}\\
    &=\frac{3-2f_n}{W}+\so{\frac{1}{\log(n)}},
    \end{split}
    \end{equation}
    where the second identity comes from \eqref{eq:Rn}.
    Similar calculations yield that
    \begin{equation}\label{eq:T2}
    \begin{split}
    &1-\frac{\lambda_{d_n-2}}{H_n}= \frac{3+2f_n}{\log(n)}+\so{\frac{1}{\log(n)}}.
    \end{split}
    \end{equation}
    Henceforth, from \eqref{eq:T1} and \eqref{eq:T2}, we conclude that for all $n$ big enough
    \begin{equation}\label{eq:u}
    \begin{split}
    &\min_{j\neq d_n-1,d_n,d_n+1}u_{j}=1-\max\curly{\frac{\lambda_{d_n-2}}{H_n};\frac{\lambda_{d_n+2}}{H_n}}\\
    &=\frac{3-2f_n}{\log(n)}+\so{\frac{1}{\log(n)}}\geq \frac{1}{2\log(n)}.
    \end{split}
    \end{equation}
    Then from the Markov inequality with $h=\frac{H_n}{\lambda_j}-1>0$, $a=\ln(1+h)=\ln(\frac{H_n}{\lambda_j})$ and from \eqref{eq:prop} we get that
    \begin{equation}\label{eq:Mar}
    \begin{split}
    &\Pbb{V_j>H_n}\leq \Ebb{e^{a V_j}}e^{-a H_n}=e^{\lambda_j\lbrb{e^{a}-1}-a H_n}=e^{\lambda_jh-H_n\ln(1+h)}\\
    &=e^{(H_n-\lambda_j)-H_n\ln\lbrb{\frac{H_n}{\lambda_j}}}=e^{H_n\lbrb{u_{j}+\ln(1-u_{j})}}\leq e^{-CH_nu^2_{j}},
    \end{split}
    \end{equation}
    where in the last inequality we have used \eqref{eq:u} and  the obvious fact that $\ln(1-x)+x\leq -Cx^2$, for some $C>0$ small enough. However, again from  \eqref{eq:u} and \eqref{eq:Rn},
    \[\min_{j\neq d_n-1,d_n,d_n+1}u_{j}\geq \frac{1}{2\log(n)}\text{ and } H_n=R_n-c\sqrt{R_n}\geq \frac{n}{\log^{2}(n)}\]
    for all $n\geq n^*\geq n(c)$. Therefore, for all $n\geq n^*\geq n(c)$ we have from \eqref{eq:Mar} that
    \begin{equation*}
    \begin{split}
    &\Pbb{V_j>H_n} \leq e^{-\frac{C}4\frac{H_n}{\log^2(n)}}\leq e^{-\frac{C}4\frac{n}{\log^4(n)}}.
    \end{split}
    \end{equation*}
    Let $U_n:=\lceil e^{\frac{C}4\frac{n}{\log^5(n)}} \rceil$. Then, clearly
    \begin{equation*}
    \begin{split}
    \limi{n}&\sum_{j=1;j\neq d_n-1,d_n,d_n+1}^{U_n}\Pbb{V_j>H_n}\leq \limi{n}U_ne^{-\frac{C}4\frac{n}{\log^4(n)}}=0;\\
    &\sum_{j=1;j\neq d_n-1,d_n,d_n+1}^{U_n}\Pbb{V_j>H_n}\leq\lceil e^{\frac{C}4\frac{n}{\log^5(n)}} \rceil\so{e^{-\frac{C}{4}\frac{n}{\log^4(n)}}} =\so{e^{-\frac{C}{8}\frac{n}{\log^4(n)}}}.
    \end{split}
    \end{equation*}
    Let us consider now $j>U_n$. Since $d_n\sim W\sim \log(n)$, see \eqref{d} and \eqref{w}, then $U_n-d_n>0$. We use the Chebyshev's inequality to get
    \begin{equation*}
    \begin{split}
    &\Pbb{V_j>H_n}\leq\frac{\lambda_j}{H_n}=    \frac{R_n\prod_{k=d_n+1}^{j}\frac{W}{k}}{H_n}=\frac{\prod_{k=d_n+1}^{j}\frac{W}{k}}{1-\frac{c}{R^{\frac12}_n}}\\
    &\leq   \frac{\prod_{k=U_n}^{j}\frac{W}{k}}{1-\frac{c}{R^{\frac12}_n}}\leq\frac{\frac{W^{j-U_n}}{U^{j-U_n}_n}}{1-\frac{c}{R^{\frac12}_n}}\sim  \frac{W^{j-U_n}}{U^{j-U_n}_n}.
    \end{split}
    \end{equation*}
    Summing over $j>U_n$, we get that
    \begin{equation*}
    \begin{split}
    &\limi{n}\sum_{j>U_n}\Pbb{V_j>H_n}\leq \limi{n}\sum_{j>U_n}\frac{W^{j-U_n}}{U^{j-U_n}_n}=\limi{n}\frac{W}{U_n}\frac{1}{1-\frac{W}{U_n}}=0;\\
    &\sum_{j>U_n}\Pbb{V_j>H_n}\lesssim \frac{W}{U_n}=\so{\log(n)e^{-\frac{C}{4}\frac{n}{\log^5(n)}}},
    \end{split}
    \end{equation*}
    since evidently $\frac{W}{U_n}=\so{\log(n)e^{-\frac{C}{4}\frac{n}{\log^5(n)}}}$. This concludes the proof of the proposition.
\end{proof}
As an important corollary we obtain that
\begin{corollary}\label{cor:prod}
    For any $c\in\Rb$, with $H_n:=R_n-c\sqrt{R_n}$
    \begin{equation*}
    \begin{split}
    \limi{n}    &   \prod_{j=1,j\neq d_n-1,d_n,d_n+1}^{\infty}\Pbb{V_j\leq R_n-c\sqrt{R_n}}=1
    \end{split}
    \end{equation*}
    and even more
     \begin{equation*}
    \begin{split}
    &\prod_{j=1,j\neq d_n-1,d_n,d_n+1}^{\infty}\Pbb{V_j\leq R_n-c\sqrt{R_n}} =1-\so{e^{-\frac{n}{\log^6(n)}}}.
    \end{split}
    \end{equation*}
\end{corollary}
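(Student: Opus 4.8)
The plan is to read this off directly from Proposition~\ref{prop:est} together with the elementary Weierstrass product inequality, so the argument is essentially a repackaging and will be short.

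First I would fix $c\in\Rb$ and, for $j\notin\curly{d_n-1,d_n,d_n+1}$, abbreviate $a_j:=\Pbb{V_j>H_n}\in[0,1]$, so that $\Pbb{V_j\leq H_n}=1-a_j$. By \eqref{eq:sum3} of Proposition~\ref{prop:est} the series $\sum_{j\neq d_n-1,d_n,d_n+1}a_j$ converges (indeed its value tends to $0$ as $n\to\infty$), hence the infinite product $\prod_{j\neq d_n-1,d_n,d_n+1}(1-a_j)$ converges as well, which makes the left-hand sides in the corollary well defined.

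Next I would invoke the two-sided bound
\[
1-\sum_{j}a_j\;\leq\;\prod_{j}(1-a_j)\;\leq\;1 ,
\]
valid for any summable family with $a_j\in[0,1]$: the upper bound is trivial, and the lower bound follows by an immediate induction over finite sub-products (each factor lies in $[0,1]$) and passage to the limit. Applying this with the $a_j$ above gives
\[
1-\sum_{j\neq d_n-1,d_n,d_n+1}\Pbb{V_j>H_n}\;\leq\;\prod_{j\neq d_n-1,d_n,d_n+1}\Pbb{V_j\leq R_n-c\sqrt{R_n}}\;\leq\;1 .
\]
Letting $n\to\infty$ and using $\limi{n}\sum_{j\neq d_n-1,d_n,d_n+1}\Pbb{V_j>H_n}=0$ from Proposition~\ref{prop:est} yields the first assertion; using instead the sharper bound $\sum_{j\neq d_n-1,d_n,d_n+1}\Pbb{V_j>H_n}=\so{e^{-\frac{n}{\log^6(n)}}}$ from the same proposition yields the refinement $\prod_{j\neq d_n-1,d_n,d_n+1}\Pbb{V_j\leq R_n-c\sqrt{R_n}}=1-\so{e^{-\frac{n}{\log^6(n)}}}$.

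There is no genuine obstacle here; the corollary merely restates Proposition~\ref{prop:est} in product form, and the only points deserving a line of justification are the convergence of the infinite product (guaranteed by summability of the $a_j$) and the elementary inequality $\prod(1-a_j)\geq 1-\sum a_j$.
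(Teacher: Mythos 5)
Your proposal is correct, and it relies on exactly the same key input as the paper, namely Proposition \ref{prop:est}; the only difference is the elementary device used to convert the sum estimate into a product estimate. The paper writes the product as $\exp\bigl(\sum_{j}\log(1-\Pbb{V_j>H_n})\bigr)$ and uses $\log(1-x)=-x+\so{x}$, which requires the additional (easy) observation that $\sup_{j}\Pbb{V_j>H_n}\to 0$ so that the expansion is uniform in $j$; you instead use the Weierstrass-type bound $1-\sum_j a_j\leq\prod_j(1-a_j)\leq 1$ for $a_j\in[0,1]$, which sidesteps the uniformity remark entirely and also makes transparent that the deviation $1-\prod_j(1-a_j)$ is nonnegative and dominated by $\sum_j\Pbb{V_j>H_n}=\so{e^{-n/\log^6(n)}}$, giving both assertions at once. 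Your version is, if anything, marginally more economical; the paper's version has the minor advantage of producing an exact multiplicative form $e^{-(1+\soo{1})\sum_j\Pbb{V_j>H_n}}$, but for the statement as formulated the two arguments are interchangeable and equally rigorous.
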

\begin{proof}
    The proof is trivial using Proposition \ref{prop:est}. First note that it additionally implies that
    \begin{equation*}
    \begin{split}
    &\limi{n}\sup_{j\geq 1;j\neq d_n-1,d_n,d_n+1}\Pbb{V_j>H_n}=0
    \end{split}
    \end{equation*}
     and then note that the latter together with $\log(1-x)=-x+\so{x}$, as $x\to 0$, triggers
    \begin{equation*}
    \begin{split}
    &       \limi{n}\prod_{j=1,j\neq d_n-1,d_n,d_n+1}^{\infty}\Pbb{V_j\leq R_n-c\sqrt{R_n}}\\
    &=\limi{n} e^{\sum_{j=1,j\neq d_n-1,d_n,d_n+1}^{\infty}\log(1-\Pbb{V_j>H_n})}\\
    &=\limi{n}  e^{-\lbrb{1+\soo{1}}\sum_{j=1,j\neq d_n-1,d_n,d_n+1}^{\infty}\Pbb{V_j>H_n}} =1
    \end{split}
    \end{equation*}
    and
    \begin{equation*}
    \begin{split}
    &   \prod_{j=1,j\neq d_n-1,d_n,d_n+1}^{\infty}\Pbb{V_j\leq R_n-c\sqrt{R_n}}= e^{\sum_{j=1,j\neq d_n-1,d_n,d_n+1}^{\infty}\log(1-\Pbb{V_j>H_n})}\\
    &=  e^{-\lbrb{1+\soo{1}}\sum_{j=1,j\neq d_n-1,d_n,d_n+1}^{\infty}\Pbb{V_j>H_n}} =1-\so{e^{-\frac{n}{\log^6(n)}}}.
    \end{split}
    \end{equation*}
\end{proof}

We now have all the ingredients for the proof of Theorem \ref{cor:max}.
\begin{proof}[Proof of Theorem \ref{cor:max}]
     For any $c\in \Rb$, we have from \eqref{eq:maxIm}, Corollary \ref{cor:prod} and \eqref{eq:Norm} that
    \begin{equation*}
    \begin{split}
    &\Pbb{\bar{V}_n\leq R_n-c\sqrt{R_n}}=\lbrb{\prod_{j=d_n-1}^{d_n+1}\Pbb{V_j\leq R_n-c\sqrt{R_n}}}\lbrb{1-\so{e^{-\frac{n}{\log^6(n)}}}},
    \end{split}
    \end{equation*}
    which proves \eqref{eq:maxIm3}. However, since $\lbrb{V_{d_n}-R_n}/\sqrt{R_n}$ converges in distribution to the standard normal law we immediately get that
    \begin{equation*}
    \begin{split}
    &\prod_{j=d_n-1}^{d_n+1}\Pbb{V_j\leq R_n-c\sqrt{R_n}}\\
    &=\Phi(-c)\Pbb{V_{d_n-1}\leq R_n-c\sqrt{R_n}}\Pbb{V_{d_n+1}\leq R_n-c\sqrt{R_n}}+\so{1},
    \end{split}
    \end{equation*}
    which settles \eqref{eq:maxIm4} and concludes the proof.
\end{proof}

Now we are ready to commence the proof of Theorem \ref{thm:main} for which we need a helpful claim. Let
\begin{equation}\label{eq:B}
b(W):=b(W(n))=W\lbrb{1+W}e^W=n(1+W)\stackrel{\eqref{w}}{\sim} n\log(n).
\end{equation}
Then recalling the definition of $M_n$ under the probability
measure $P$, see \eqref{rv}, we
have the following statement.
\begin{prop}\label{thm:asymp}
    The following asymptotic relation holds for any
    $c\in\Rb\cup\curly{-\infty}$:
    \begin{equation}\label{eq:lim}
    \begin{split}
    &\frac{B_n}{n!}P\lbrb{M_n\leq R_n-c\sqrt{R_n}}=\frac{e^{e^W-1}}{\sqrt{2\pi}}\frac{\prod_{j=d_n-1}^{d_n+1}\Pbb{V_j\leq R_n-c\sqrt{R_n}}}{W^n\sqrt{b(W)}}\lbrb{1+\so{1}}.
    \end{split}
    \end{equation}
    As a consequence
    \begin{equation}\label{eq:lim11}
    \begin{split}
    &P\lbrb{M_n\leq R_n-c\sqrt{R_n}}=\prod_{j=d_n-1}^{d_n+1}\Pbb{V_j\leq R_n-c\sqrt{R_n}}+\so{1}.
    \end{split}
    \end{equation}
\end{prop}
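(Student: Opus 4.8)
Throughout put $m=m_n:=\lfloor R_n-c\sqrt{R_n}\rfloor$ (and $m:=\infty$ when $c=-\infty$); since $M_n$ is integer valued, $\curly{M_n\le R_n-c\sqrt{R_n}}=\curly{M_n\le m}$, so by \eqref{eq:integral}, \eqref{eq:decompo} and \eqref{eq:jnk} the task is to analyse $J_n=J_{n,1}+J_{n,2}+J_{n,3}$. The key preliminary observation is that, feeding $F_m(We^{i\theta})e^{-i\theta n}=e^{e^W-e^{We^{i\theta}}}\Ebb{e^{i\theta S}\ind{A_m}}$ from Lemma \ref{lem:F} into \eqref{eq:jnk}, the double–exponential factor cancels and
\[
J_{n,k}=\frac{W^{-n}e^{e^W-1}}{2\pi}\int_{D_k}\Ebb{e^{i\theta S}\ind{A_m}}\,d\theta,\qquad S=\sum_{j\ge1}j(V_j-\lambda_j),\ \ A_m=\bigcap_{j\ge1}\curly{V_j\le m}.
\]
I then split off the three critical indices: $S=S'+S''$ with $S''=\sum_{j=d_n-1}^{d_n+1}j(V_j-\lambda_j)$, and $A_m=A'_m\cap A''_m$ with $A''_m=\bigcap_{j=d_n-1}^{d_n+1}\curly{V_j\le m}$, the primed objects carrying the indices $j\ne d_n-1,d_n,d_n+1$. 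By independence $\Ebb{e^{i\theta S}\ind{A_m}}=\Ebb{e^{i\theta S'}\ind{A'_m}}\Ebb{e^{i\theta S''}\ind{A''_m}}$; and since $A'_m=\bigcap_{j\ne d_n-1,d_n,d_n+1}\curly{V_j\le R_n-c\sqrt{R_n}}$, Corollary \ref{cor:prod} gives $\Pbb{(A'_m)^c}=\so{e^{-n/\log^6(n)}}$, hence $\bigl|\Ebb{e^{i\theta S'}\ind{A'_m}}-\varphi(\theta)\bigr|\le\Pbb{(A'_m)^c}$ with $\varphi(\theta):=\Ebb{e^{i\theta S'}}$.

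I expect the main difficulty to be disposing of the tails $J_{n,2},J_{n,3}$. Since $\abs{\Ebb{e^{i\theta S''}\ind{A''_m}}}\le1$, the factorisation above gives $\abs{\Ebb{e^{i\theta S}\ind{A_m}}}\le\abs{\varphi(\theta)}+\Pbb{(A'_m)^c}$ with $\abs{\varphi(\theta)}=\exp\bigl(-\sum_{j\ne d_n-1,d_n,d_n+1}\lambda_j(1-\cos(j\theta))\bigr)$, so it suffices to show $\sum_{j\ne d_n-1,d_n,d_n+1}\lambda_j(1-\cos(j\theta))\gtrsim n^{2/7}$ uniformly on $D_2\cup D_3$; as $R_n\sim n/(\sqrt{2\pi}\log^{3/2}(n))$ by Lemma \ref{lem:r}, this forces $\abs{\varphi(\theta)}\le e^{-cn^{2/7}}$ there and $\abs{J_{n,2}}+\abs{J_{n,3}}=W^{-n}e^{e^W-1}\so{e^{-cn^{2/7}}}$. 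The bound is obtained by partitioning the range. For $\delta_n\le\abs\theta\le1/\log(n)$ one has $j\theta\to0$ over the whole bulk $j\asymp d_n$, so the sum is $\sim\tfrac12\theta^2\sum_{j\ne d_n-1,d_n,d_n+1}j^2\lambda_j\sim\tfrac12\theta^2 b(W)\ge\tfrac12 b(W)\delta_n^2\sim\tfrac12 n^{2/7}$, using $\sum_{j\ge1} j^2\lambda_j=W(1+W)e^W=b(W)\sim n\log(n)$ (cf.\ \eqref{eq:B}) and $\sum_{j=d_n-1}^{d_n+1}j^2\lambda_j\sim3R_nd_n^2=\soo{b(W)}$. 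For $c'\log^{-3/4}(n)\le\abs\theta<\pi$ with a suitably large constant $c'$, the identity $\sum_{j\ge1}\lambda_j(1-\cos(j\theta))=e^W-e^{W\cos\theta}\cos(W\sin\theta)\ge e^W(1-e^{-W(1-\cos\theta)})\ge10R_n$ dominates the deleted piece $\sum_{j=d_n-1}^{d_n+1}\lambda_j(1-\cos(j\theta))\le6R_n$. Finally, in the transition window $1/\log(n)\le\abs\theta\le c'\log^{-3/4}(n)$ one expands $e^{W\cos\theta}=e^W(1-\tfrac12 W\theta^2+\soo{W\theta^2})$, $\cos(W\sin\theta)=\cos(W\theta)+\bo{W\theta^3}$ and $\sum_{j=d_n-1}^{d_n+1}\lambda_j(1-\cos(j\theta))=3R_n(1-\cos(W\theta))+\soo{n\theta^2}$ (here $|j-W|\le2$ and $\lambda_{d_n\pm1}/R_n\to1$), so that the remaining sum equals $(1-\cos(W\theta))(e^W-3R_n)(1+\soo1)+\tfrac12 n\theta^2\cos(W\theta)+\soo{n\theta^2}$, which is $\gtrsim e^W$ if $\cos(W\theta)\le\tfrac12$ and $\gtrsim n\theta^2\ge n\log^{-2}(n)$ otherwise — the quadratic correction rescuing the bound exactly at the resonances $W\theta\in2\pi\Zb$.

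It remains to extract $J_{n,1}$. On $D_1=\curly{\abs\theta<\delta_n}$ the integrand is $e^{e^W-1}\bigl(\varphi(\theta)\Ebb{e^{i\theta S''}\ind{A''_m}}+\bo{\Pbb{(A'_m)^c}}\bigr)$ and $\int_{D_1}\bo{\Pbb{(A'_m)^c}}d\theta=\so{e^{-n/\log^6(n)}}$ is negligible. Setting $\sigma_n:=\mathrm{Var}(S')=\sum_{j\ne d_n-1,d_n,d_n+1}j^2\lambda_j=b(W)(1+\soo1)$ and substituting $\theta=t/\sqrt{\sigma_n}$ (the range becoming $\abs t<\delta_n\sqrt{\sigma_n}\sim n^{1/7}\to\infty$), the expansion $\log\varphi(\theta)=-\tfrac12\theta^2\sigma_n+\bo{|\theta|^3\sum_{j}j^3\lambda_j}$ with $\sum_{j}j^3\lambda_j=(W^3+3W^2+W)e^W\sim nW^2$ and the choice \eqref{eq:delta_n} (so that $\delta_n^3 n\log^2(n)\to0$) give $\varphi(t/\sqrt{\sigma_n})\to e^{-t^2/2}$ pointwise and $\abs{\varphi(t/\sqrt{\sigma_n})}\le e^{-ct^2}$ on $D_1$; moreover $\Ebb{e^{i\theta S''}\ind{A''_m}}=\Pbb{A''_m}+\bo{\abs\theta\sqrt{\mathrm{Var}(S'')}}$ with $\sqrt{\mathrm{Var}(S'')/\sigma_n}\sim\log^{-1/4}(n)\to0$. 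Dominated convergence (dominating function $(1+\abs t)e^{-ct^2}$) then yields
\[
\int_{D_1}\varphi(\theta)\Ebb{e^{i\theta S''}\ind{A''_m}}\,d\theta=\frac1{\sqrt{\sigma_n}}\Bigl(\sqrt{2\pi}\,\Pbb{A''_m}(1+\soo1)+\soo1\Bigr)=\frac{\sqrt{2\pi}}{\sqrt{b(W)}}\,\Pbb{A''_m}\,(1+\soo1),
\]
the last step because $\Pbb{A''_m}=\prod_{j=d_n-1}^{d_n+1}\Pbb{V_j\le R_n-c\sqrt{R_n}}$ is bounded below by a positive constant depending only on $c$: by Lemma \ref{lem:scale} each of the three centred and scaled thresholds is $\ge-c+\soo1$, and each $V_j$, $j\in\curly{d_n-1,d_n,d_n+1}$, is asymptotically standard normal under that centring and scaling. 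Combining, $J_{n,1}=\tfrac{e^{e^W-1}}{\sqrt{2\pi}}\,\tfrac{\prod_{j=d_n-1}^{d_n+1}\Pbb{V_j\le R_n-c\sqrt{R_n}}}{W^n\sqrt{b(W)}}(1+\soo1)$, which dominates $J_{n,2}+J_{n,3}$; this proves \eqref{eq:lim}. Finally \eqref{eq:lim11} follows by dividing \eqref{eq:lim} by its own $c=-\infty$ instance — equivalently by the Moser–Wyman asymptotics $B_n/n!\sim e^{e^W-1}/(W^n\sqrt{2\pi b(W)})$ from the Introduction — and using $\prod_{j=d_n-1}^{d_n+1}\Pbb{V_j\le R_n-c\sqrt{R_n}}\le1$.
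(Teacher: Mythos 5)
Your argument is correct, and its skeleton coincides with the paper's: the same Cauchy integral on the circle of radius $W$, the same probabilistic rewriting via Lemma \ref{lem:F}, the removal of all indices except $d_n-1,d_n,d_n+1$ through Proposition \ref{prop:est} and Corollary \ref{cor:prod}, a Gaussian evaluation of the central arc $D_1$, and division by the $c=-\infty$ (Moser--Wyman) instance to obtain \eqref{eq:lim11}. Where you genuinely diverge is on the minor arcs. The paper never confronts the oscillation of $\cos(j\theta)$ for $j\asymp W$: on $\gamma_n\le|\theta|\le\pi$ it bounds $\Re\,\phi_n$ by $e^{W\cos\gamma_n}-e^W+6R_n$ (Lemma \ref{lem:phin1}), and on $\delta_n\le|\theta|\le\gamma_n$ it discards all factors except a single index $l$ chosen so that $l\theta$ stays small ($l=10$, respectively $l_n=\lfloor 6W/7\rfloor$; Lemma \ref{lem:prod3}). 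You instead keep the full modulus $\exp\lbrb{-\sum_{j\neq d_n-1,d_n,d_n+1}\lambda_j(1-\cos(j\theta))}$ and must therefore handle the resonances $W\theta\in 2\pi\Zb$, which you do through the closed form $e^W-e^{W\cos\theta}\cos(W\sin\theta)$ and a second-order expansion in the window $\lbbrbb{1/\log(n),\,c'\log^{-3/4}(n)}$; this is more delicate but self-contained, and it delivers a uniform bound $e^{-cn^{2/7}}$, comfortably $\so{1/\sqrt{b(W)}}$, whereas the paper's single-factor device is the more elementary way to dodge the resonance issue. Two small remarks. First, near $|\theta|=1/\log(n)$ one has $W\theta\asymp 1$, so your claim that the non-critical sum is $\sim\tfrac12\theta^2 b(W)$ there is not literally justified; it should be stated as a lower bound of that order (restrict to $j\le \pi/|\theta|$, use $1-\cos x\ge \tfrac{2}{\pi^2}x^2$, and note that the discarded Poisson tail and the three critical terms contribute $\so{b(W)}$), which is all you use. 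Second, your explicit observation that $\prod_{j=d_n-1}^{d_n+1}\Pbb{V_j\le R_n-c\sqrt{R_n}}$ is bounded below via Lemma \ref{lem:scale} is exactly what converts the additive error of the $D_1$ analysis into the multiplicative form \eqref{eq:lim}; the paper leaves this implicit by stating Lemma \ref{lem:J1} with an additive $\so{1}$. Finally, on $D_1$ your rescaling-plus-dominated-convergence argument replaces the paper's further split at $\delta'_n=\log\log(n)/\sqrt{n\log(n)}$; the two are equivalent in content.
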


    We set  $m=\lfloor H_n\rfloor=\lfloor R_n-c\sqrt{R_n}\rfloor$ and use the decomposition \eqref{eq:decompo} and \eqref{eq:jnk} with $k=1$ to recall that
\begin{equation}\label{eq:J_1}
\begin{split}
&J_{1,n}=\frac{W^{-n}}{2\pi}\int_{-\delta_n}^{\delta_n} e^{e^{We^{i\theta}}-1}F_m\lbrb{We^{i\theta}}e^{-i\theta n}d\theta.
\end{split}
\end{equation}
Then we have the following result.
\begin{lemma}\label{lem:J1}
    We have that
    \begin{equation}\label{eq:J_111}
    \begin{split}
    &J_{1,n}=\frac{W^{-n}e^{e^W-1}}{\sqrt{2\pi}\sqrt{b(W)}}\lbrb{\prod_{j=d_n-1}^{d_n+1}\Pbb{V_j\leq m}+\so{1}}.
    \end{split}
    \end{equation}
\end{lemma}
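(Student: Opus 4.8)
The plan is to turn $J_{1,n}$ into a local–limit computation for the integer‑valued random walk $S_n:=\sum_{j\geq1}j(V_j-\lambda_j)=\sum_{j\geq 1}jV_j-n$, tracked jointly with the truncation event. First I would insert the representation \eqref{eq:repF} of Lemma \ref{lem:F} into \eqref{eq:J_1}: the factor $e^{e^W-e^{We^{i\theta}}}$ there cancels the $e^{e^{We^{i\theta}}-1}$ in the integrand of $J_{1,n}$, leaving
\[
J_{1,n}=\frac{W^{-n}e^{e^W-1}}{2\pi}\int_{-\delta_n}^{\delta_n}\Ebb{e^{i\theta S_n}\ind{A_n}}\,d\theta,\qquad A_n:=\bigcap_{j\geq1}\curly{V_j\leq m}.
\]
Writing $A_n=A_n'\cap A_n''$ with $A_n'':=\bigcap_{j=d_n-1}^{d_n+1}\curly{V_j\leq m}$ and $A_n':=\bigcap_{j\neq d_n-1,d_n,d_n+1}\curly{V_j\leq m}$, and noting that $m=\lfloor H_n\rfloor$ makes $\curly{V_j\leq m}=\curly{V_j\leq H_n}$, Corollary \ref{cor:prod} gives $\Pbb{(A_n')^c}=\so{e^{-n/\log^6(n)}}$, so replacing $\ind{A_n}$ by $\ind{A_n''}$ changes the integral by at most $2\delta_n\Pbb{(A_n')^c}$, which is negligible against the $W^{-n}e^{e^W-1}b(W)^{-1/2}$ scale of the asserted value.

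Next I would decouple the bulk of $S_n$ from the truncation. Split $S_n=S_n'+S_n''$ with $S_n'':=\sum_{j=d_n-1}^{d_n+1}j(V_j-\lambda_j)$; since $S_n'$ is built from the coordinates disjoint from those appearing in $S_n''$ and $A_n''$, it is independent of $(S_n'',A_n'')$, whence $\Ebb{e^{i\theta S_n}\ind{A_n''}}=\varphi_n(\theta)\,\Ebb{e^{i\theta S_n''}\ind{A_n''}}$ with $\varphi_n(\theta):=\Ebb{e^{i\theta S_n'}}$, and $\Ebb{e^{i\theta S_n''}\ind{A_n''}}=\Pbb{A_n''}+\Ebb{(e^{i\theta S_n''}-1)\ind{A_n''}}$, the last term bounded in modulus by $\abs{\theta}\,\Ebb{\abs{S_n''}}\leq\abs{\theta}\sqrt{Var(S_n'')}$. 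The decisive separation of scales, coming from Lemmas \ref{lem:r}, \ref{lem:scale} and \eqref{eq:Rn}, is
\[
Var(S_n'')=\sum_{j=d_n-1}^{d_n+1}j^2\lambda_j=\bo{(\log n)^2R_n}=\bo{n\sqrt{\log n}}=\so{b(W)},
\]
because $b(W)=n(1+W)\sim n\log n$ by \eqref{eq:B}; in particular $\sqrt{Var(S_n'')}=\so{\sqrt{b(W)}}$, and, using $\sum_{j\geq1}j^2\lambda_j=W(1+W)e^W=b(W)$, also $\sigma_n'^2:=\sum_{j\neq d_n-1,d_n,d_n+1}j^2\lambda_j=b(W)\lbrb{1+\so{1}}$.

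The analytic core is the Gaussian behaviour of $\varphi_n$ on $D_1$. From $\log\varphi_n(\theta)=\sum_{j\neq d_n-1,d_n,d_n+1}\lambda_j(e^{i\theta j}-1-i\theta j)$ and the Taylor bound $\abs{e^{ix}-1-ix+x^2/2}\leq\abs{x}^3/6$ one gets, uniformly on $D_1$, $\log\varphi_n(\theta)=-\tfrac12\theta^2\sigma_n'^2+\bo{\abs{\theta}^3\sum_j j^3\lambda_j}$; since $\sum_j j^3\lambda_j=\bo{n\log^2 n}$ and $\delta_n^3\,n\log^2 n=\so{1}$, this reads $\varphi_n(\theta)=e^{-\theta^2\sigma_n'^2/2}\lbrb{1+\soo{1}}$, with in particular $\abs{\varphi_n(\theta)}\leq e^{-\theta^2 b(W)/4}$ on $D_1$ for $n$ large. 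Because $\delta_n\sqrt{b(W)}\sim n^{1/7}\to\infty$, Gaussian integration gives $\int_{D_1}\varphi_n(\theta)\,d\theta=\sqrt{2\pi}\,b(W)^{-1/2}\lbrb{1+\so{1}}$, and $\int_{D_1}\abs{\varphi_n(\theta)}\,\abs{\theta}\,d\theta=\bo{b(W)^{-1}}$, so the remainder integral from the previous step is $\bo{\sqrt{Var(S_n'')}\,b(W)^{-1}}=\so{b(W)^{-1/2}}$.

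Collecting the three estimates,
\[
J_{1,n}=\frac{W^{-n}e^{e^W-1}}{2\pi}\lbrb{\frac{\sqrt{2\pi}}{\sqrt{b(W)}}\,\Pbb{A_n''}\lbrb{1+\so{1}}+\so{b(W)^{-1/2}}}=\frac{W^{-n}e^{e^W-1}}{\sqrt{2\pi}\sqrt{b(W)}}\lbrb{\Pbb{A_n''}+\so{1}},
\]
and $\Pbb{A_n''}=\prod_{j=d_n-1}^{d_n+1}\Pbb{V_j\leq m}$ by independence, which is precisely \eqref{eq:J_111}. I expect the genuine obstacle to be the uniform‑in‑$\theta$ Gaussian estimate for $\varphi_n$ on $D_1$ — the central‑region saddle‑point step — together with the bookkeeping that the decoupling error is of strictly smaller order than the main term of size $\asymp b(W)^{-1/2}$; both hinge on the scale separation $Var(S_n'')\ll b(W)$, i.e.\ ultimately on $\lambda_{d_n\pm1}\sim R_n\sim n/(\sqrt{2\pi}\log^{3/2}n)$ from Lemmas \ref{lem:r}–\ref{lem:scale}.
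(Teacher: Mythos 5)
Your argument is correct and reaches \eqref{eq:J_111} by the same overall skeleton as the paper: restrict to $D_1$, drop the truncation on all coordinates except $j\in\curly{d_n-1,d_n,d_n+1}$ at a cost $\so{e^{-n/\log^6(n)}}$ (Proposition \ref{prop:est}), factor out the characteristic function $\varphi_n(\theta)=e^{\phi_n(\theta)}$ of the remaining coordinates, expand $\log\varphi_n$ to third order with $\sigma_n'^2=b(W)\lbrb{1+\so{1}}$ and $\delta_n^3\,n\log^2 n=\so{1}$, and finish with a Gaussian integral of width $\asymp b(W)^{-1/2}$. Where you genuinely deviate is the treatment of the three central truncated factors: the paper introduces a second cutoff $\delta_n'=\log\log(n)/\sqrt{n\log(n)}$, shows the annulus $\delta_n'\le\abs{\theta}\le\delta_n$ contributes $\so{b(W)^{-1/2}}$ via the Gaussian tail, and on $\abs{\theta}\le\delta_n'$ argues qualitatively that $\Ebb{e^{i\theta j(V_j-\lambda_j)}\ind{V_j\le m}}=\Pbb{V_j\le m}+\so{1}$ using $\abs{\theta}j\sqrt{\lambda_j}=\so{1}$ and tightness of $(V_j-\lambda_j)/\sqrt{\lambda_j}$; you instead use the quantitative bound $\abs{\Ebb{(e^{i\theta S_n''}-1)\ind{A_n''}}}\le\abs{\theta}\sqrt{Var(S_n'')}$ valid on all of $D_1$ and integrate it against $\abs{\varphi_n(\theta)}\,\abs{\theta}$, exploiting the scale separation $Var(S_n'')=\bo{n\sqrt{\log n}}=\so{b(W)}$. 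Your route avoids the auxiliary scale $\delta_n'$ and the tightness argument, at the price of needing the slightly sharper bookkeeping $\sqrt{Var(S_n'')}/b(W)=\so{b(W)^{-1/2}}$ (which holds, with only a $\log^{1/4}n$ margin). One cosmetic point: the pointwise bound $\abs{\varphi_n(\theta)}\le e^{-\theta^2 b(W)/4}$ on $D_1$ should carry a harmless prefactor $e^{\soo{1}}$ coming from the uniform cubic error, since that error is $\so{1}$ but not dominated by $\theta^2\sigma_n'^2$ for the smallest $\theta$; this does not affect the two integrals you actually use.
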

    The proof of the lemma requires several intermediate results. The first provides a simplification of the infinite product of \eqref{eq:repF}. Recall that $d_n=\lfloor  W\rfloor$.
    \begin{prop}\label{prop:repF}
        We have that
        \begin{equation}\label{eq:repProd}
        \begin{split}
        \prod_{j=1,j\neq d_n,d_n-1,d_n+1}^{\infty}\Ebb{e^{i\theta j (V_j-\lambda_j)} \ind{V_j\leq m}}&=e^{\phi_n(\theta)}+K_{n,m}(\theta),
        \end{split}
        \end{equation}
        where
        \begin{equation}\label{eq:phin}
        \begin{split}
        &\phi_n(\theta)=e^{We^{i\theta}}-e^{W}-\sum_{j=d_n-1}^{d_n+1}\frac{W^je^{i\theta j}}{j!}+\sum_{j=d_n-1}^{d_n+1}\frac{W^j}{j!}-i\theta n+i\theta\sum_{j=d_n-1}^{d_n+1}j\lambda_j
        \end{split}
        \end{equation}
        and
        \begin{equation}\label{eq:estimate}
        \begin{split}
        &\sup_{\theta\in\Rb}\abs{K_{n,m}(\theta)}=\so{e^{-\frac{n}{\log^6(n)}}}=\so{\frac{1}{\sqrt{b(W)}}}.
        \end{split}
        \end{equation}
    \end{prop}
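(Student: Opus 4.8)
The plan is to recognize $e^{\phi_n(\theta)}$ as exactly the product in \eqref{eq:repProd} with all the truncating indicators $\ind{V_j\le m}$ removed, and then to control the resulting error $K_{n,m}(\theta)$ by the tail probabilities already estimated in Proposition \ref{prop:est}. For the first point, since $V_j$ is Poisson of parameter $\lambda_j$ one has $\Ebb{e^{i\theta j(V_j-\lambda_j)}}=e^{\lambda_j\lbrb{e^{i\theta j}-1-i\theta j}}$, so that for each fixed $\theta$
\[
\prod_{j=1}^{\infty}\Ebb{e^{i\theta j(V_j-\lambda_j)}}=\exp\lbrb{\sum_{j=1}^{\infty}\lambda_j\lbrb{e^{i\theta j}-1-i\theta j}},
\]
the exponent series converging absolutely since $\sum_{j\ge1}j^2\lambda_j<\infty$ (this finiteness was already noted in the proof of Lemma \ref{lem:F}). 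Splitting the series and using $\sum_{j\ge1}\lambda_je^{i\theta j}=e^{We^{i\theta}}-1$, $\sum_{j\ge1}\lambda_j=e^{W}-1$ together with $\sum_{j\ge1}j\lambda_j=n$, see \eqref{eq:a}, gives $\sum_{j\ge1}\lambda_j\lbrb{e^{i\theta j}-1-i\theta j}=e^{We^{i\theta}}-e^{W}-i\theta n$; deleting the three indices $j\in\curly{d_n-1,d_n,d_n+1}$ from the sum produces exactly $\phi_n(\theta)$ as in \eqref{eq:phin}. Hence $e^{\phi_n(\theta)}=\prod_{j=1,\,j\neq d_n-1,d_n,d_n+1}^{\infty}\Ebb{e^{i\theta j(V_j-\lambda_j)}}$, and by \eqref{eq:repF} the quantity $K_{n,m}(\theta)$ is, by definition, the difference of the truncated and the untruncated products over $j\neq d_n-1,d_n,d_n+1$.

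To bound $K_{n,m}(\theta)$ uniformly in $\theta$, write $a_j(\theta)=\Ebb{e^{i\theta j(V_j-\lambda_j)}\ind{V_j\le m}}$ and $b_j(\theta)=\Ebb{e^{i\theta j(V_j-\lambda_j)}}$. Then $\abs{a_j(\theta)}\le\Pbb{V_j\le m}\le1$, $\abs{b_j(\theta)}\le1$, and $\abs{a_j(\theta)-b_j(\theta)}=\abs{\Ebb{e^{i\theta j(V_j-\lambda_j)}\ind{V_j>m}}}\le\Pbb{V_j>m}$. Both infinite products converge for fixed $\theta$ and $n$ — the untruncated one by the above, the truncated one because $\sum_j\abs{1-a_j(\theta)}\le\sum_j\abs{1-b_j(\theta)}+\sum_j\Pbb{V_j>m}<\infty$, where finiteness of the last sum is part of Proposition \ref{prop:est}. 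Applying the elementary estimate $\abs{\prod_{j\in F}a_j-\prod_{j\in F}b_j}\le\sum_{j\in F}\abs{a_j-b_j}$, valid for any finite index set $F$ when all $\abs{a_j},\abs{b_j}\le1$, with $F=\curly{j\le N:\ j\neq d_n-1,d_n,d_n+1}$ and then letting $N\to\infty$, gives the $\theta$-free bound
\[
\sup_{\theta\in\Rb}\abs{K_{n,m}(\theta)}\le\sum_{j=1,\,j\neq d_n-1,d_n,d_n+1}^{\infty}\Pbb{V_j>m}.
\]

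It remains to invoke Proposition \ref{prop:est}. Since $m=\lfloor H_n\rfloor$ with $H_n=R_n-c\sqrt{R_n}$ and each $V_j$ is integer-valued, we have $\curly{V_j>m}=\curly{V_j\ge m+1}\subseteq\curly{V_j>H_n}$ because $m+1>H_n$; therefore the last sum is at most $\sum_{j\neq d_n-1,d_n,d_n+1}\Pbb{V_j>H_n}=\so{e^{-\frac{n}{\log^{6}(n)}}}$ by Proposition \ref{prop:est}. Finally $b(W)\sim n\log(n)$ by \eqref{eq:B}, so $\frac{1}{\sqrt{b(W)}}$ decays only polynomially and hence $e^{-\frac{n}{\log^{6}(n)}}=\so{\frac{1}{\sqrt{b(W)}}}$, which is \eqref{eq:estimate}. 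I do not anticipate a genuine obstacle here, since the analytic work has been front-loaded into Proposition \ref{prop:est}; what remains is the exact algebraic matching with $\phi_n(\theta)$ together with the standard product-difference inequality. The one point deserving a line of care is the passage from the integer truncation level $m$ to the real threshold $H_n$, and the verification that the two infinite products occurring in \eqref{eq:repProd} are genuinely well defined.
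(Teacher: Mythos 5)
Your proof is correct and follows essentially the same route as the paper: identify $e^{\phi_n(\theta)}$ with the untruncated product of characteristic functions via $\sum_j\lambda_j\lbrb{e^{i\theta j}-1-i\theta j}$ and \eqref{eq:a}, then bound the truncation error by $\sum_{j\neq d_n-1,d_n,d_n+1}\Pbb{V_j>H_n}$ and invoke Proposition \ref{prop:est}. The only cosmetic difference is that you control the error with the telescoping product-difference inequality, while the paper writes the truncated product as a single joint expectation and subtracts the contribution of the union event $\bigcup_j\curly{V_j>m}$; both give the same bound.
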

\begin{proof}[Proof of Proposition \ref{prop:repF}]
    From the independence of $\curly{V_j}_{j\geq 1}$ we observe that
    \begin{equation*}
    \begin{split}
    &\prod_{j=1,j\neq d_n,d_n-1,d_n+1}^{\infty}\Ebb{e^{i\theta j (V_j-\lambda_j)} \ind{V_j\leq m}}\\
    &=\Ebb{e^{i\theta\sum_{j=1,j\neq d_n,d_n-1,d_n+1}^{\infty}j(V_j-\lambda_j)}\ind{\bigcap_{j=1,j\neq d_n,d_n-1,d_n+1}^{\infty}\curly{V_j\leq m}} }\\
    &=\Ebb{e^{i\theta\sum_{j=1,j\neq d_n,d_n-1,d_n+1}^{\infty}j(V_j-\lambda_j)}}-\\
    &-\Ebb{e^{i\theta\sum_{j=1,j\neq d_n,d_n-1,d_n+1}^{\infty}j(V_j-\lambda_j)}\ind{\bigcup_{j=1,j\neq d_n,d_n-1,d_n+1}^{\infty}\curly{V_j> m}}}\\
    &=\prod_{j=1,j\neq d_n,d_n-1,d_n+1}^{\infty}\Ebb{e^{i\theta j (V_j-\lambda_j)}}+K_{n,m}(\theta).
    \end{split}
    \end{equation*}
    Clearly, from \eqref{eq:sum3} and $m=\lfloor H_n\rfloor$
    \begin{equation*}
    \begin{split}
    &\sup_{\theta\in\Rb}\abs{K_{n,m}(\theta)}\leq \sum_{j=1;j\neq d_n-1,d_n,d_n+1}\Pbb{V_j>m}\\
    &=\sum_{j=1;j\neq d_n-1,d_n,d_n+1}\Pbb{V_j>H_n}
    =\so{e^{-\frac{n}{\log^6(n)}}},
    \end{split}
    \end{equation*}
    which  with the help of \eqref{eq:B} proves \eqref{eq:estimate}. Next, note that since
    \begin{equation*}
    \begin{split}
    &\Ebb{e^{i\theta\sum_{j=1}^{\infty}j(V_j-\lambda_j)}}=\prod_{j=1}^\infty e^{-\frac{W^j}{j!}\lbrb{1-e^{i\theta j}}}e^{-i\theta j\lambda_j}=e^{-i\theta n}e^{e^{We^{i\theta}}-e^{W}},
    \end{split}
    \end{equation*}
    where we have used \eqref{eq:a}, we conclude that
    \begin{equation*}
    \begin{split}
    &\prod_{j=1,j\neq d_n,d_n-1,d_n+1}^{\infty}\Ebb{e^{i\theta j (V_j-\lambda_j)}}\\
    &=e^{e^{We^{i\theta}}-e^{W}-\sum_{j=d_n-1}^{d_n+1}\frac{W^je^{i\theta j}}{j!}+\sum_{j=d_n-1}^{d_n+1}\frac{W^j}{j!}-i\theta n+i\theta\sum_{j=d_n-1}^{d_n+1}j\lambda_j}
    =e^{\phi_n(\theta)}.
    \end{split}
    \end{equation*}
    Thus we obtain \eqref{eq:repProd} and \eqref{eq:phin} and the proposition is  therefore proved.
\end{proof}
Next, Proposition \ref{prop:repF} allows with an application of \eqref{eq:repF} in \eqref{eq:J_1}  the following new representation of $J_{1,n}$:
\begin{equation}\label{eq:J_11}
\begin{split}
&J_{1,n}=\frac{W^{-n}e^{e^W-1}}{2\pi}\int_{-\delta_n}^{\delta_n}    \prod_{j=d_n-1}^{d_n+1}\Ebb{e^{i\theta j (V_j-\lambda_j)} \ind{V_j\leq m}}\lbrb{e^{\phi_n(\theta)}+\so{e^{-\frac{n}{\log^6(n)}}}}d\theta\\
&=\frac{W^{-n}e^{e^W-1}}{2\pi}\lbrb{\int_{-\delta_n}^{\delta_n}     \prod_{j=d_n-1}^{d_n+1}\Ebb{e^{i\theta j (V_j-\lambda_j)} \ind{V_j\leq m}}e^{\phi_n(\theta)}d\theta+\so{e^{-\frac{n}{\log^6(n)}}}}.
\end{split}
\end{equation}
We proceed to give some elementary properties for $\phi_n$.
\begin{lemma}\label{lem:phin}
   We have that
    \begin{equation}\label{eq:phinder}
    \begin{split}
    \phi_n(0)&=\phi'_n(0)=0,\\ \phi_n''(0)&=-b(W)+\sum_{j=d_n-1}^{d_n+1}j^2\frac{W^j}{j!}=-b(W)\lbrb{1+\so{1}},\\
    \abs{\phi'''_n(\theta)}&\leq  \lbrb{W^3+3W^2+W}e^{W}+\sum_{j=d_n-1}^{d_n+1} j^2\frac{W^j}{j!},\\
    &= W^3e^W\lbrb{1+\so{1}}= n\log^2(n)\lbrb{1+\so{1}}.
    \end{split}
    \end{equation}
    Therefore, for $\abs{\theta}\leq \delta_n$ we have that
    \begin{equation}\label{eq:Taylor}
    \begin{split}
    &\phi_n(\theta)=-b(W)\frac{\theta^2}2(1+\so{1})+\so{1}.
    \end{split}
    \end{equation}
\end{lemma}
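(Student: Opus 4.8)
The plan is to differentiate $\phi_n$ directly from \eqref{eq:phin}, splitting it into the analytic piece $h(\theta):=e^{We^{i\theta}}$, the finite trigonometric polynomial $p(\theta):=-\sum_{j=d_n-1}^{d_n+1}\frac{W^je^{i\theta j}}{j!}$, the affine piece $\ell(\theta):=-i\theta n+i\theta\sum_{j=d_n-1}^{d_n+1}j\lambda_j$, and the irrelevant constant $-e^W+\sum_{j=d_n-1}^{d_n+1}\frac{W^j}{j!}$. For $h$ one checks inductively that $h'(\theta)=iWe^{i\theta}h(\theta)$, $h''(\theta)=-(We^{i\theta}+W^2e^{2i\theta})h(\theta)$ and $h'''(\theta)=-i(We^{i\theta}+3W^2e^{2i\theta}+W^3e^{3i\theta})h(\theta)$, while $p^{(k)}(\theta)=-\sum_{j=d_n-1}^{d_n+1}\frac{(ij)^kW^je^{i\theta j}}{j!}$ and $\ell^{(k)}\equiv 0$ for $k\ge 2$. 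The value $\phi_n(0)=0$ is immediate since every bracketed pair in \eqref{eq:phin} cancels. Evaluating the first derivative at $\theta=0$ gives $\phi_n'(0)=iWe^W-i\sum_{j=d_n-1}^{d_n+1}j\lambda_j-in+i\sum_{j=d_n-1}^{d_n+1}j\lambda_j=i(We^W-n)=0$ by \eqref{eq}.

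For the second derivative at $0$, the $h$-part contributes $-(W+W^2)e^W=-W(1+W)e^W=-b(W)$ (which, using $We^W=n$, equals $-(1+W)n$, consistent with \eqref{eq:B}), the $p$-part contributes $\sum_{j=d_n-1}^{d_n+1}j^2\frac{W^j}{j!}$, and $\ell$ contributes nothing; this is the stated identity for $\phi_n''(0)$. To see the correction is negligible, invoke Lemma \ref{lem:r}: $d_n=\lfloor W\rfloor\sim W\sim\log(n)$ by \eqref{w}, $\lambda_j\le R_n\sim\frac{1}{\sqrt{2\pi}}\,n/\log^{3/2}(n)$ by \eqref{eq:Rn}, and $b(W)\sim n\log(n)$ by \eqref{eq:B}, so $\sum_{j=d_n-1}^{d_n+1}j^2\lambda_j=\bo{(\log^{2}(n))\,n/\log^{3/2}(n)}=\bo{n\sqrt{\log(n)}}=\so{b(W)}$, whence $\phi_n''(0)=-b(W)(1+\so{1})$.

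For the third derivative I would bound it uniformly over $\theta\in\Rb$: since $|h(\theta)|=e^{W\cos\theta}\le e^W$ and $|We^{i\theta}+3W^2e^{2i\theta}+W^3e^{3i\theta}|\le W+3W^2+W^3$, the $h$-part of $\phi_n'''$ has modulus at most $(W^3+3W^2+W)e^W$, and the $p$-part has modulus at most $\sum_{j=d_n-1}^{d_n+1}j^3\frac{W^j}{j!}$ (which in particular bounds the term written in the statement). Using $e^W=n/W$ one has $(W^3+3W^2+W)e^W=nW^2(1+\so{1})=n\log^{2}(n)\,(1+\so{1})$ by \eqref{w}, while by Lemma \ref{lem:r} the $p$-contribution is $\bo{(\log^{3}(n))\,n/\log^{3/2}(n)}=\so{n\log^{2}(n)}$; hence $\sup_{\theta\in\Rb}|\phi_n'''(\theta)|\le n\log^{2}(n)\,(1+\so{1})$.

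Finally, \eqref{eq:Taylor} follows from Taylor's formula with integral remainder applied to the real and imaginary parts of $\phi_n$, namely $\phi_n(\theta)=\frac{\phi_n''(0)}{2}\theta^2+\frac{1}{2}\int_0^\theta(\theta-s)^2\phi_n'''(s)\,ds$, so the remainder is at most $\frac{|\theta|^3}{6}\sup_{|s|\le|\theta|}|\phi_n'''(s)|$. For $|\theta|\le\delta_n$, \eqref{eq:delta_n} gives $\delta_n^3\, n\log^{2}(n)=\frac{n^{3/7}}{(n\log(n))^{3/2}}\,n\log^{2}(n)=\frac{\sqrt{\log(n)}}{n^{1/14}}\to 0$, so the remainder is $\so{1}$ uniformly in such $\theta$, and combining this with $\phi_n''(0)=-b(W)(1+\so{1})$ yields $\phi_n(\theta)=-b(W)\frac{\theta^2}{2}(1+\so{1})+\so{1}$. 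The only step that genuinely needs care is this matching of the polynomial-in-$W$ growth of $\phi_n'''$ against the width $\delta_n$, i.e. verifying $\delta_n^3\,W^3e^W\to 0$, which is precisely where the exponent $\tfrac17$ in \eqref{eq:delta_n} enters; everything else is routine differentiation together with the already-established asymptotic \eqref{eq:Rn}.
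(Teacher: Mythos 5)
Your proof is correct and follows essentially the same route as the paper: direct differentiation of \eqref{eq:phin}, evaluation at $0$ via $We^{W}=n$, absorption of the sums $\sum_{j=d_n-1}^{d_n+1}j^{k}\lambda_j$ using \eqref{eq:Rn} and $d_n\sim\log n$, and a third-order Taylor bound with $\delta_n^3\,n\log^2(n)\to 0$. Your observation that the cubic term actually requires $j^{3}$ rather than the $j^{2}$ written in the displayed bound matches the paper's own proof (which also uses $j^{3}$), and it is immaterial for the asymptotics.
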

\begin{proof}
    For the first derivative of $\phi_n$ we get
    \begin{equation*}
    \begin{split}
    &\phi'_n(\theta)=iWe^{i\theta}e^{We^{i\theta}}-i\sum_{j=d_n-1}^{d_n+1} j\frac{W^j}{j!}e^{i\theta j}-in+i\sum_{j=d_n-1}^{d_n+1}j\lambda_j;\\
    &\phi'_n(0)=iWe^W-i\sum_{j=d_n-1}^{d_n+1} j\frac{W^j}{j!}-in+i\sum_{j=d_n-1}^{d_n+1}j\frac{W^j}{j!}=0,
    \end{split}
    \end{equation*}
    where the very last relation follows from \eqref{eq}.
    Also
    \begin{equation*}
    \begin{split}
    \phi''_n(\theta)&=-W^2e^{2i\theta}e^{We^{i\theta}}-We^{i\theta}e^{We^{i\theta}}+\sum_{j=d_n-1}^{d_n+1} j^2\frac{W^j}{j!}e^{i\theta j};\\
    \phi''_n(0)&=-W^2e^W-We^W+\sum_{j=d_n-1}^{d_n+1} j^2\frac{W^j}{j!}\\
    &=-b(W)+\sum_{j=d_n-1}^{d_n+1} j^2\frac{W^j}{j!},
    \end{split}
    \end{equation*}
    where the last identity holds since \eqref{eq:B} is valid.
    In addition, note that since $\max_{j\geq 1} \lambda_j=\max_{j\geq 1} \frac{W^j}{j!}=R_n$, see \eqref{eq:maxPar}, we have that
    \begin{equation*}
    \begin{split}
    &\sum_{j=d_n-1}^{d_n+1}j^2\frac{W^j}{j!}\leq 3(d_n+1)^2R_n= 3\log^2(n)\frac{n}{\sqrt{2\pi}\log^{3/2}(n)}\lbrb{1+\so{1}}\\
    &=\frac{3}{\sqrt{2\pi}}n\log^{\frac12}(n)\lbrb{1+\so{1}}=\so{b(W)},
    \end{split}
    \end{equation*}
    where the first equality follows from \eqref{eq:Rn} and the very last relation is due to \eqref{eq:B}.
    The first two parts of \eqref{eq:phinder} are therefore established.
    Taking third derivative we get that
    \begin{equation*}
    \begin{split}
    &\abs{\phi'''_n(\theta)}\leq \lbrb{W^3+3W^2+W}e^{W}+\sum_{j=d_n-1}^{d_n+1} j^3\frac{W^j}{j!}\\
    &= W^3e^W\lbrb{1+\so{1}}+\sum_{j=d_n-1}^{d_n+1} j^3\frac{W^j}{j!}.
    \end{split}
    \end{equation*}
    However, using \eqref{eq:Rn}, \eqref{eq:maxPar} and \eqref{w} we deduce that
    \begin{equation*}
    \begin{split}
    &W^3e^W=nW^2= n\log^2(n)\lbrb{1+\so{1}}\\
    &\sum_{j=d_n-1}^{d_n+1}j^3\frac{W^j}{j!}\leq 3(d_n+1)^3R_n\\
    &= 3\log^3(n)\frac{n}{\log^{3/2}(n)}\lbrb{1+\so{1}}\\
    &= 3n\log^{3/2}(n)\lbrb{1+\so{1}}.
    \end{split}
    \end{equation*}
    This confirms the last part of \eqref{eq:phinder}. Relation \eqref{eq:Taylor} comes with the help of  \eqref{eq:phinder}, Taylor expansion of third order, $\abs{\theta}\leq \delta_n$, see \eqref{eq:delta_n}, and
    \begin{equation*}
    \begin{split}
    \phi_n(\theta)&=-b(W)\frac{\theta^2}2(1+\so{1})+\bo{\delta^3_n n\log^2(n)}\\
    &=-b(W)\frac{\theta^2}2(1+\so{1})+\bo{ \frac{n^{\frac{3}{7}}}{n^{\frac{3}{2}}\log^{\frac{3}{2}}(n)}n\log^2(n)}\\
    &=-b(W)\frac{\theta^2}2(1+\so{1})+\so{1}.
    \end{split}
    \end{equation*}
    This concludes the overall proof of Lemma \ref{lem:phin}.
\end{proof}
We are now ready to tackle Lemma \ref{lem:J1}.
\begin{proof}[Proof of Lemma \ref{lem:J1}]

We write \[\delta'_n=\frac{\log\log(n)}{\sqrt{n\log(n)}}.\]  Next for $\lbbrbb{\delta'_n,\delta_n}$ we get from the representation \eqref{eq:J_11} and the asymptotic relation \eqref{eq:Taylor} that
\begin{equation*}
\begin{split}
&\abs{\int_{|\theta|\in\lbbrbb{\delta'_n,\delta_n}}\prod_{j=d_n,d_n-1,d_n+1}\Ebb{e^{i\theta j (V_j-\lambda_j)}\ind{V_j\leq m}}e^{\phi_n(\theta)}d\theta}\\
&=\abs{\int_{|\theta|\in\lbbrbb{\delta'_n,\delta_n}}\prod_{j=d_n,d_n-1,d_n+1}\Ebb{e^{i\theta j (V_j-\lambda_j)} \ind{V_j\leq m}}e^{-b(W)\frac{\theta^2}2(1+\soo{1})+\soo{1}}d\theta}\\
&\leq\int_{|\theta|\in\lbbrbb{\delta'_n,\delta_n}}e^{-b(W)\frac{\theta^2}2(1+\soo{1})+\soo{1}}d\theta \\ &=\frac{1}{\sqrt{b(W)}}\int_{|y|\in\lbbrbb{\delta'_n\sqrt{b(W)},\delta_n\sqrt{b(W)}}}e^{-\frac{y^2}{2}\lbrb{1+\soo{1}}+\soo{1}}dy\\
&\leq \frac{1}{\sqrt{b(W)}}\int_{|y|\geq \delta'_n\sqrt{b(W)}}e^{-\frac{y^2}{2}\lbrb{1+\soo{1}+\soo{1}}}dy\\
&=\frac{\so{1}}{\sqrt{b(W)}},
\end{split}
\end{equation*}
because
\[\limi{n}\delta'_n\sqrt{b(W)}=\limi{n}\frac{\log\log(n)}{\sqrt{n\log(n)}}\sqrt{n(1+W)}=\infty.\]
Therefore, we conclude from \eqref{eq:J_11} combined with
\eqref{eq:B} and the fact that
$e^{-\frac{n}{\log^6(n)}}$ is of lower order than
$\frac{1}{\sqrt{b(W)}}$ that
\begin{equation}\label{eq:J_12}
\begin{split}
&J_{1,n}=\frac{W^{-n}e^{e^W-1}}{2\pi}\!\!\lbrb{\int_{-\delta'_n}^{\delta'_n}    \prod_{j=d_n-1}^{d_n+1}\!\!\!\!\!\Ebb{e^{i\theta j (V_j-\lambda_j)}\ind{V_j\leq m}}e^{\phi_n(\theta)}d\theta+\so{\frac{1}{\sqrt{b(W)}}}}\!\!.
\end{split}
\end{equation}
It remains to examine the integral term. Using \eqref{eq:Taylor} we arrive as above at the equivalent term
\begin{equation*}
\begin{split}
&\int_{-\delta'_n}^{\delta'_n}  \prod_{j=d_n-1}^{d_n+1}\Ebb{e^{i\theta j (V_j-\lambda_j)} \ind{V_j\leq m}}e^{-b(W)\frac{\theta^2}2(1+\soo{1})+\soo{1}}d\theta\\
&=\frac{1}{\sqrt{b(W)}}\int_{-\delta'_n\sqrt{b(W)}}^{\delta'_n\sqrt{b(W)}}\prod_{j=d_n-1}^{d_n+1}\!\!\!\!\Ebb{e^{i\frac{y}{\sqrt{b(W)}}j\sqrt{\lambda_{j}} \frac{(V_j-\lambda_{j})}{\sqrt{\lambda_{j}}}}\ind{V_j\leq m}}e^{-\frac{y^2}2(1+\soo{1})+\soo{1}}dy.
\end{split}
\end{equation*}
Now, from \eqref{w}, \eqref{eq:maxPar} and \eqref{eq:Rn}, we have that with $z_j(y):=\frac{y}{\sqrt{b(W)}}j\sqrt{\lambda_{j}}, y\in\lbbrbb{-\delta'_n\sqrt{b(W)},\delta'_n\sqrt{b(W)}}$ the following bound is valid
\begin{equation*}
\begin{split}
&\abs{z_j(y)}\leq \delta'_n (d_n+1)\sqrt{R_n}\sim  \frac{\log\log(n)}{\sqrt{n\log(n)}}\log^{1/4}(n)\sqrt{n}=\so{1}.
\end{split}
\end{equation*}
Henceforth,
\begin{equation*}
\begin{split}
&\prod_{j=d_n-1}^{d_n+1}\Ebb{e^{iz_j(y)\frac{(V_j-\lambda_{j})}{\sqrt{\lambda_{j}}}} \ind{V_j\leq m}}\\
&=\prod_{j=d_n-1}^{d_n+1}\lbrb{\Pbb{V_j\leq m}+\Ebb{\lbrb{e^{iz_j(y) \frac{(V_j-\lambda_{j})}{\sqrt{\lambda_j}}}-1}\ind{V_j\leq m}}}\\
&=\prod_{j=d_n-1}^{d_n+1}\Pbb{V_j\leq m}+\so{1},
\end{split}
\end{equation*}
 where the very last relation follows from
\begin{equation*}
\begin{split}
&\abs{\Ebb{\lbrb{e^{iz_j(y) \frac{(V_j-\lambda_{j})}{\sqrt{\lambda_{j}}}}-1}\ind{V_j\leq m}}}\leq \Ebb{\min\curly{1,|z_j(y)| \abs{\frac{V_j-\lambda_{j}}{\sqrt{\lambda_{j}}}}}}=\so{1},
\end{split}
\end{equation*}
because $|z_j(y)|=\so{1}$ uniformly for the specified range of
$y$, \[\limi{n}\frac{(V_j-\lambda_{j})}{\sqrt{\lambda_{j}}}=Z_1\]
 and the sequence
$\curly{\frac{V_j-\lambda_{j}}{\sqrt{\lambda_{j}}}}_{j\geq 1}$ is
as a consequence tight. Therefore, we further conclude using
$\limi{n}\delta'_n\sqrt{b(W)}=\infty$ that
\begin{equation*}
\begin{split}
&\int_{-\delta'_n}^{\delta'_n}  \prod_{j=d_n-1}^{d_n+1}\Ebb{e^{i\theta j (V_j-\lambda_j)}\ind{V_j\leq m}}e^{-b(W)\frac{\theta^2}2(1+\soo{1})+\soo{1}}d\theta\\
&=\frac{\prod_{j=d_n-1}^{d_n+1}\Pbb{V_j\leq m}}{\sqrt{b(W)}}\int_{-\delta'_n\sqrt{b(W)}}^{\delta'_n\sqrt{b(W)}}e^{-\frac{y^2}2(1+\soo{1})+\soo{1}}dy+\so{\frac{1}{\sqrt{b(W)}}}\\
&=\sqrt{2\pi}\frac{\prod_{j=d_n-1}^{d_n+1}\Pbb{V_j\leq m}}{\sqrt{b(W)}}+\so{\frac{1}{\sqrt{b(W)}}}.
\end{split}
\end{equation*}
This together with \eqref{eq:J_12}  proves \eqref{eq:J_111} and Lemma \ref{lem:J1} is completed.
\end{proof}

 We recall that $\gamma_n:=\frac{1}{\log^{\frac{1}{5}}(n)}$, see \eqref{eq:gamma n}, and
consider
\begin{equation}\label{eq:J_2}
\begin{split}
&J_{2,n}=\frac{W^{-n}}{2\pi}\int_{\pi\geq |\theta|\geq \gamma_n} e^{e^{We^{i\theta}}-1}F_m\lbrb{We^{i\theta}}e^{-i\theta n}d\theta\\
&=\frac{W^{-n}e^{e^W-1}}{2\pi}\int_{\pi\geq|\theta|\geq \gamma_n}    \prod_{j=d_n-1}^{d_n+1}\Ebb{e^{i\theta j (V_j-\lambda_j)}\ind{V_j\leq m}}e^{\phi_n(\theta)}d\theta\\
& \,+\frac{W^{-n}e^{e^W-1}}{2\pi}\so{\frac{1}{\sqrt{b(W)}}},
\end{split}
\end{equation}
see \eqref{eq:jnk} and recall the claims of Proposition \ref{prop:repF}  for the last identity. To investigate the integral term we discuss further $\phi_n$.
\begin{lemma}\label{lem:phin1}
   We have that
    \begin{equation}\label{eq:Rephi}
    \begin{split}
    &\Re\,\phi_n(\theta)=e^{W\cos(\theta)}\cos(W\sin(\theta))-e^W+\sum_{j=d_n-1}^{d_n+1}\frac{W^j}{j!}\lbrb{1-\cos(\theta j)}\\
    &=\lbrb{e^{W\cos(\theta)}-e^W}-e^{W\cos(\theta)}\lbrb{1-\cos\lbrb{W\sin(\theta)}}+\sum_{j=d_n-1}^{d_n+1}\frac{W^j}{j!}\lbrb{1-\cos(\theta j)}.
    \end{split}
    \end{equation}
    As a consequence the following estimate holds
    \begin{equation}\label{eq:Reph}
    \begin{split}
    & \limi{n}\sup_{\pi\geq |\theta|\geq \gamma_n}\log(n)\frac{\Re\,\phi_n(\theta)}{n}\leq -C<0.
    \end{split}
    \end{equation}
\end{lemma}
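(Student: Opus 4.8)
The plan is to handle the two assertions separately: the identity \eqref{eq:Rephi} is a direct computation, while the bound \eqref{eq:Reph} is a uniform, term-by-term estimate of the three groups appearing in its second line.

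For \eqref{eq:Rephi} I would substitute $e^{We^{i\theta}}=e^{W\cos\theta}\lbrb{\cos(W\sin\theta)+i\sin(W\sin\theta)}$ and $e^{i\theta j}=\cos(\theta j)+i\sin(\theta j)$ into the definition \eqref{eq:phin} of $\phi_n(\theta)$. The summands $-i\theta n+i\theta\sum_{j=d_n-1}^{d_n+1}j\lambda_j$ are purely imaginary and therefore drop out upon taking the real part, which yields the first displayed line of \eqref{eq:Rephi}. The second line then follows from the elementary rearrangement $e^{W\cos\theta}\cos(W\sin\theta)=e^{W\cos\theta}-e^{W\cos\theta}\lbrb{1-\cos(W\sin\theta)}$.

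For \eqref{eq:Reph} I would estimate, uniformly over $\gamma_n\le|\theta|\le\pi$, the three groups of terms in the second line of \eqref{eq:Rephi}. (a) Since $e^{W\cos\theta}>0$ and $1-\cos(W\sin\theta)\ge 0$, the middle term is nonpositive and may be discarded from an upper bound. (b) By \eqref{eq:maxPar} we have $W^j/j!\le R_n$ for every $j$, and $1-\cos(\theta j)\le 2$, so the last sum in \eqref{eq:Rephi} is bounded by $6R_n$, which by \eqref{eq:Rn} is $\bo{n/\log^{3/2}(n)}$. (c) For the remaining term I would use that $\cos$ is decreasing on $[0,\pi]$, so $\cos\theta\le\cos\gamma_n$ throughout the range, giving $e^{W\cos\theta}-e^W\le e^{W\cos\gamma_n}-e^W=-e^W\lbrb{1-e^{W(\cos\gamma_n-1)}}$; since $1-\cos\gamma_n=\tfrac12\gamma_n^2+\bo{\gamma_n^4}$ with $\gamma_n=\log^{-1/5}(n)$ (see \eqref{eq:gamma n}) and $W\sim\log(n)$ by \eqref{w}, we get $W(\cos\gamma_n-1)=-\tfrac12 W\gamma_n^2\lbrb{1+\so{1}}\sim-\tfrac12\log^{3/5}(n)\to-\infty$, hence $e^{W(\cos\gamma_n-1)}=\so{1}$ and this term is at most $-e^W\lbrb{1-\so{1}}=-\tfrac{n}{W}\lbrb{1+\so{1}}$ uniformly in $\theta$. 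Adding the three bounds, $\Re\,\phi_n(\theta)\le-\tfrac{n}{W}\lbrb{1+\so{1}}+\bo{n/\log^{3/2}(n)}$ uniformly on $\gamma_n\le|\theta|\le\pi$; multiplying by $\log(n)/n$ and using $\log(n)/W\to 1$ gives $\sup_{\gamma_n\le|\theta|\le\pi}\log(n)\,\Re\,\phi_n(\theta)/n\le-1+\so{1}$, so \eqref{eq:Reph} holds with any $C\in(0,1)$.

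I do not anticipate a substantive obstacle; the computation is essentially a uniform Laplace-type bound on the arc away from the saddle point $\theta=0$. The only delicate point is maintaining uniformity over the wide range $\gamma_n\le|\theta|\le\pi$, which is why in (c) I would invoke monotonicity of $\cos$ rather than a local Taylor expansion; the decisive quantitative input is that the choice $\gamma_n=\log^{-1/5}(n)$ forces $W\gamma_n^2\sim\log^{3/5}(n)\to\infty$, so that $e^{W\cos\theta}$ is smaller than $e^W$ by a factor beating every power of $\log(n)$ throughout the range, leaving the dominant contribution $-e^W\sim-n/\log(n)$ intact.
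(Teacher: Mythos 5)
Your proof is correct and follows essentially the same route as the paper: discard the nonpositive middle term of \eqref{eq:Rephi}, bound the three-term sum by $6R_n=\bo{n/\log^{3/2}(n)}$ via \eqref{eq:maxPar} and \eqref{eq:Rn}, and use monotonicity of cosine on $[0,\pi]$ together with $W\gamma_n^2\sim\log^{3/5}(n)\to\infty$ to extract the dominant contribution $-e^W\lbrb{1+\so{1}}=-\tfrac{n}{W}\lbrb{1+\so{1}}$, hence the bound $-\tfrac{n}{\log(n)}\lbrb{1+\so{1}}$. Your use of the Taylor expansion of $1-\cos\gamma_n$ is in fact a touch cleaner than the paper's invocation of $1-\cos x\le x^2/2$ at that step, but the argument is the same.
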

\begin{proof}[Proof of Lemma \ref{lem:phin1}]
    Relation \eqref{eq:Rephi} follows easily from \eqref{eq:phin}.  Since the second term in the last relation of \eqref{eq:Rephi} is non-positive and $\cos(\gamma_n)\geq \cos(\theta), \gamma_n\leq |\theta|\leq \pi,$ we get that for all large $n$
   \begin{equation*}
   \begin{split}
    &\Re\,\phi_n(\theta)\leq \lbrb{e^{W\cos(\gamma_n)}-e^W}+ 6R_n= -e^{W}\lbrb{1-e^{-W(1-\cos(\gamma_n))}}+ 6R_n\\
    &\leq -e^W\lbrb{1-e^{-W\frac{\gamma^2_n}{2}}}+ 6R_n\\
    &= -e^W\lbrb{1-e^{-\frac{W}{2\log^{\frac25}(n)}}}+6R_n=-\frac{n}{W}\lbrb{1-e^{-\frac{W}{2\log^{\frac25}(n)}}}+6R_n\\
    &=-\frac{n}{W}\lbrb{1+\so{1}}+6R_n= \lbrb{-\frac{n}{ \log(n)}+\frac{n}{\sqrt{2\pi}\log^{\frac{3}{2}}(n)}}\lbrb{1+\so{1}}\\
    &=-\frac{n}{\log(n)}\lbrb{1-\frac{4}{\log^{\frac{1}{2}}(n)}}\lbrb{1+\so{1}}\\
    &=-\frac{n}{\log(n)}\lbrb{1+\so{1}},
    \end{split}
    \end{equation*}
    where in the first inequality we have used from \eqref{eq:maxPar} that
    \[\sum_{j=d_n-1}^{d_n+1}\frac{W^j}{j!}\lbrb{1-\cos(\theta j)}\leq 6R_n,\]
    in the second one that $1-\cos(x)\leq \frac{x^2}{2}$ and  we have also invoked \eqref{eq} and \eqref{eq:Rn}. This shows \eqref{eq:Reph} and concludes the proof of the statement.
\end{proof}
Therefore with Lemma \ref{lem:phin1} we conclude that for all large $n$
\begin{equation*}
\begin{split}
&\abs{\int_{\pi\geq |\theta|\geq \gamma_n}\prod_{j=d_n,d_n-1,d_n+1}\Ebb{e^{i\theta j (V_j-\lambda_j)} \ind{V_j\leq m}}e^{\phi_n(\theta)}d\theta}\leq \int_{\pi\geq |\theta|\geq \gamma_n}e^{\Re\,\phi_n(\theta)}d\theta\\
&\leq 2\pi  exp\curly{ -C\frac{n}{
\log(n)}}=\so{\frac{1}{\sqrt{b(W)}}},
\end{split}
\end{equation*}
 where the very last relation follows from \eqref{eq:B}. Henceforth,
 applying
this in \eqref{eq:J_2} we arrive at
the following
\begin{lemma}\label{lem:J_2}
    We have that
    \begin{equation}\label{eq:J_21}
    \begin{split}
    J_{2,n}&=\frac{W^{-n}}{2\pi}\int_{\pi\geq |\theta|\geq \gamma_n} e^{e^{We^{i\theta}}-1}F_m\lbrb{We^{i\theta}}e^{-i\theta n}d\theta\\
    &=\frac{W^{-n}e^{e^W-1}}{2\pi}\so{\frac{1}{\sqrt{b(W)}}}.
    \end{split}
    \end{equation}
\end{lemma}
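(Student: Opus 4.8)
The plan is to combine the representation \eqref{eq:J_2} with the pointwise bound on $\Re\,\phi_n$ away from the saddle point furnished by Lemma \ref{lem:phin1}. First I would record how \eqref{eq:J_2} itself arises. Starting from \eqref{eq:jnk} with $k=2$ and applying Lemma \ref{lem:F} together with the identity $e^{e^{We^{i\theta}}-1}e^{e^W-e^{We^{i\theta}}}=e^{e^W-1}$, the integrand of $J_{2,n}$ equals $e^{e^W-1}\prod_{j\geq 1}\Ebb{e^{i\theta j(V_j-\lambda_j)}\ind{V_j\leq m}}$. Splitting off the three central factors $j\in\curly{d_n-1,d_n,d_n+1}$ and applying Proposition \ref{prop:repF} to the remaining infinite product rewrites this as $e^{e^W-1}\prod_{j=d_n-1}^{d_n+1}\Ebb{e^{i\theta j(V_j-\lambda_j)}\ind{V_j\leq m}}\lbrb{e^{\phi_n(\theta)}+K_{n,m}(\theta)}$. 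Since $\abs{e^{i\theta j(V_j-\lambda_j)}}=1$, each of the three truncated characteristic functions has modulus at most $\Pbb{V_j\leq m}\leq 1$, and $\sup_{\theta\in\Rb}\abs{K_{n,m}(\theta)}=\so{e^{-n/\log^6(n)}}=\so{1/\sqrt{b(W)}}$ by \eqref{eq:estimate} and \eqref{eq:B}; integrating over $\gamma_n\leq\abs{\theta}\leq\pi$, an interval of length at most $2\pi$, therefore absorbs the $K_{n,m}$-contribution into the $\so{1/\sqrt{b(W)}}$ error term of \eqref{eq:J_2}.

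It then remains to bound the leading integral $\int_{\gamma_n\leq\abs{\theta}\leq\pi}\prod_{j=d_n-1}^{d_n+1}\Ebb{e^{i\theta j(V_j-\lambda_j)}\ind{V_j\leq m}}e^{\phi_n(\theta)}\,d\theta$. I would bound the product of the three characteristic functions by $1$ in modulus and use $\abs{e^{\phi_n(\theta)}}=e^{\Re\,\phi_n(\theta)}$, so the integral is at most $\int_{\gamma_n\leq\abs{\theta}\leq\pi}e^{\Re\,\phi_n(\theta)}\,d\theta$. Now the key estimate \eqref{eq:Reph} of Lemma \ref{lem:phin1} gives, for all large $n$, $\Re\,\phi_n(\theta)\leq -Cn/\log(n)$ uniformly on $\gamma_n\leq\abs{\theta}\leq\pi$, whence this integral is at most $2\pi\,e^{-Cn/\log(n)}$. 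Since $b(W)\sim n\log(n)$ by \eqref{eq:B}, the quantity $1/\sqrt{b(W)}$ decays only like $n^{-1/2}(\log n)^{-1/2}$ while $e^{-Cn/\log(n)}$ decays faster than any power of $n$, so $2\pi\,e^{-Cn/\log(n)}=\so{1/\sqrt{b(W)}}$. Substituting this bound and the $K_{n,m}$-estimate into \eqref{eq:J_2} yields $J_{2,n}=\frac{W^{-n}e^{e^W-1}}{2\pi}\so{1/\sqrt{b(W)}}$, which is \eqref{eq:J_21}.

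The main obstacle has in fact already been dealt with: the substantive step is Lemma \ref{lem:phin1}, where one shows that $\Re\,\phi_n$ is bounded above by $-Cn/\log(n)$ on the entire complement of the shrinking neighbourhood $\abs{\theta}<\gamma_n$ of the saddle point — the delicate point there being that the negative gain $e^{W\cos\theta}-e^W$ from the Bell-type factor dominates the $\bo{R_n}$ contribution of the three retained Poisson factors. Granted that, the present lemma is essentially bookkeeping; the only matters needing a little care are that the truncated characteristic functions are bounded by $1$ in modulus (so that they may be discarded before integrating) and the elementary but necessary check that both error sources — the tail-product error $K_{n,m}$ and the off-saddle integral — are $\so{1/\sqrt{b(W)}}$ rather than merely $\so{1}$, which is precisely where \eqref{eq:B} and the super-polynomial smallness of $e^{-n/\log^6(n)}$ and $e^{-Cn/\log(n)}$ come in.
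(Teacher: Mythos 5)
Your proposal is correct and follows essentially the same route as the paper: rewrite $J_{2,n}$ via Lemma \ref{lem:F} and Proposition \ref{prop:repF}, absorb the uniformly small $K_{n,m}$ term, bound the three truncated characteristic functions by $1$ and $\abs{e^{\phi_n(\theta)}}$ by $e^{\Re\,\phi_n(\theta)}$, and invoke \eqref{eq:Reph} together with $b(W)\sim n\log(n)$ to see that $2\pi e^{-Cn/\log(n)}=\so{1/\sqrt{b(W)}}$. No gaps; this matches the paper's argument.
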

It remains to consider the region
$|\theta|\in\lbbrbb{\delta_n,\gamma_n}$, see \eqref{eq:delta_n}
for $\delta_n$ and \eqref{eq:gamma n} for $\gamma_n$. Recall
from \eqref{eq:jnk} that
\begin{equation}\label{eq:J_3}
\begin{split}
J_{3,n}&=\frac{W^{-n}}{2\pi}\int_{|\theta|\in\lbbrbb{\delta_n,\gamma_n}} e^{e^{We^{i\theta}}-1}F_m\lbrb{We^{i\theta}}e^{-i\theta n}d\theta\\
&=\frac{W^{-n}e^{e^W-1}}{2\pi}\int_{|\theta|\in\lbbrbb{\delta_n,\gamma_n}}\prod_{j=1}^{\infty}\Ebb{e^{i\theta j (V_j-\lambda_j)}\ind{V_j\leq m}}d\theta,
\end{split}
\end{equation}
where the latter follows from \eqref{eq:repF}. This estimate hinges upon the following elementary bounds.
\begin{lemma}\label{lem:prod3}
    For any positive integer $l<d_n-1$,
    \begin{equation}\label{eq:prod3}
    \begin{split}
    &\abs{\prod_{j=1}^{\infty}\Ebb{e^{i\theta j (V_j-\lambda_j)} \ind{V_j\leq m}}}\leq  e^{-\frac{W^{l}}{l!}\lbrb{1-\cos(\theta l)}}+\so{e^{-\frac{n}{\log^6(n)}}}.
    \end{split}
    \end{equation}
   Moreover, for $|\theta|\in\lbbrbb{\delta_n,\gamma_n}$ we have that
    \begin{equation}\label{eq:prod31}
    \begin{split}
    &\abs{\prod_{j=1}^{\infty}\Ebb{e^{i\theta j (V_j-\lambda_j)}\ind{V_j\leq m}}}=\so{\frac{1}{\sqrt{b(W)}}}.
    \end{split}
    \end{equation}
\end{lemma}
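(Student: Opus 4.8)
The plan is to establish the pointwise bound \eqref{eq:prod3} by isolating a single factor of the infinite product and replacing the truncated characteristic function $\Ebb{e^{i\theta l(V_l-\lambda_l)}\ind{V_l\leq m}}$ by the un-truncated one, and then to deduce \eqref{eq:prod31} by choosing, for each $\theta\in\lbbrbb{\delta_n,\gamma_n}$, a well-adapted index $l=l(\theta)<d_n-1$ for which the exponent $\frac{W^l}{l!}\lbrb{1-\cos(\theta l)}$ grows faster than $\log(n)$.

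For \eqref{eq:prod3}: by Lemma \ref{lem:F} the product $\prod_{j\geq 1}\Ebb{e^{i\theta j(V_j-\lambda_j)}\ind{V_j\leq m}}$ equals $e^{e^{We^{i\theta}}-e^W}F_m(We^{i\theta})e^{-i\theta n}$, so it converges absolutely, while every factor has modulus $\leq\Pbb{V_j\leq m}\leq 1$; hence, for any $l<d_n-1$, the modulus of the whole product is $\leq\abs{\Ebb{e^{i\theta l(V_l-\lambda_l)}\ind{V_l\leq m}}}$. I would then split the indicator, bound $\abs{\Ebb{e^{i\theta l(V_l-\lambda_l)}\ind{V_l> m}}}\leq\Pbb{V_l>m}$, and use that $V_l$ is Poisson$(\lambda_l)$ with $\lambda_l=W^l/l!$ to get $\abs{\Ebb{e^{i\theta l(V_l-\lambda_l)}}}=e^{\lambda_l(\cos(\theta l)-1)}=e^{-\frac{W^l}{l!}(1-\cos(\theta l))}$. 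Since $V_l$ is integer valued and $m=\lfloor H_n\rfloor$, $\Pbb{V_l>m}=\Pbb{V_l>H_n}$, and as $l\notin\curly{d_n-1,d_n,d_n+1}$ Proposition \ref{prop:est} gives $\Pbb{V_l>H_n}\leq\sum_{j\neq d_n-1,d_n,d_n+1}\Pbb{V_j>H_n}=\so{e^{-n/\log^6(n)}}$, uniformly in such $l$. This proves \eqref{eq:prod3}.

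For \eqref{eq:prod31} I would, by symmetry taking $\theta\in\lbbrbb{\delta_n,\gamma_n}$, exhibit $l=l(\theta)<d_n-1$ with $\frac{W^l}{l!}(1-\cos(\theta l))$ bounded below by a quantity tending to $\infty$ faster than $\log(n)$, uniformly in $\theta$; since $\so{e^{-n/\log^6(n)}}=\so{1/\sqrt{b(W)}}$ and $\sqrt{b(W)}\sim\sqrt{n\log(n)}$ by \eqref{eq:B}, the estimate \eqref{eq:prod3} then forces $\abs{\prod_{j\geq 1}\Ebb{e^{i\theta j(V_j-\lambda_j)}\ind{V_j\leq m}}}=\so{1/\sqrt{b(W)}}$. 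I split into two regimes. If $\theta(d_n-2)\leq\pi$, take $l=d_n-2$; then $1-\cos(\theta(d_n-2))\geq\frac{2}{\pi^2}(\theta(d_n-2))^2\geq\frac{2}{\pi^2}\delta_n^2(d_n-2)^2$, and since $\lambda_{d_n-2}=R_n\frac{d_n(d_n-1)}{W^2}\sim R_n$, formulas \eqref{eq:Rn} and \eqref{w} give $\lambda_{d_n-2}\lbrb{1-\cos(\theta(d_n-2))}\gtrsim R_n\delta_n^2 d_n^2\gtrsim n^{2/7}/\sqrt{\log(n)}$, which dominates $\log(n)$. If $\theta(d_n-2)>\pi$, let $l^*$ be the least $l\geq 1$ with $\theta l\geq\pi/2$; because $\theta\leq\gamma_n\to 0$ one gets $\pi/2\leq\theta l^*<\pi$ and $l^*\leq d_n-2$, so $1-\cos(\theta l^*)\geq 1$, while $l^*\geq\lceil\pi/(2\gamma_n)\rceil=:L_n$, hence by the monotonicity of $\curly{\lambda_j}_{j\leq d_n}$ from Lemma \ref{lem:r} we have $\lambda_{l^*}\geq\lambda_{L_n}$; a Stirling estimate, with $L_n\sim\frac{\pi}{2}\log^{1/5}(n)$ and $W\sim\log(n)$, yields $\log\lambda_{L_n}=L_n\log(eW/L_n)+\bo{\log\log(n)}\sim\frac{2\pi}{5}\log^{1/5}(n)\log\log(n)$, so $\lambda_{L_n}/\log(n)\to\infty$. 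In either regime $e^{-\frac{W^l}{l!}(1-\cos(\theta l))}=\so{1/\sqrt{b(W)}}$ uniformly in $\theta$, which with \eqref{eq:prod3} gives \eqref{eq:prod31}.

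The step I expect to be the main obstacle is the regime $\theta(d_n-2)>\pi$: one cannot take $l$ close to $d_n$, since there $\theta l$ may lie deep in the oscillatory range where $1-\cos(\theta l)$ is not bounded away from $0$, so one is forced onto the far smaller index $l^*\approx\pi/(2\theta)$, which can be as small as $\sim\log^{1/5}(n)$, and the crux is the Stirling bound certifying that $\lambda_{l^*}\geq\lambda_{L_n}$ still outgrows $\log(n)$, so that the estimate stays uniform over $\lbbrbb{\delta_n,\gamma_n}$; everything else is routine.
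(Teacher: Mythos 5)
Your proof of \eqref{eq:prod3} is the paper's own argument: every factor has modulus at most one, you keep the single factor $l$, remove the truncation at the cost of $\Pbb{V_l>m}=\Pbb{V_l>H_n}$, which \eqref{eq:sum3} of Proposition \ref{prop:est} makes $\so{e^{-n/\log^6(n)}}$ uniformly in $\theta$ and in $l\notin\curly{d_n-1,d_n,d_n+1}$, and compute the Poisson characteristic function. For \eqref{eq:prod31} you use the same mechanism (apply \eqref{eq:prod3} with a well-chosen $l$) but a genuinely different decomposition of $\lbbrbb{\delta_n,\gamma_n}$: you split at $\theta\approx\pi/(d_n-2)$ and choose $l$ adaptively, taking $l=d_n-2$ with the global bound $1-\cos x\geq \tfrac{2}{\pi^2}x^2$ on $[-\pi,\pi]$ when $\theta(d_n-2)\leq\pi$ (exponent $\gtrsim R_n\delta_n^2d_n^2\sim n^{2/7}/\sqrt{\log n}$), and otherwise the $\theta$-dependent index $l^*\approx\pi/(2\theta)\in\lbbrbb{L_n,d_n-2}$ with $1-\cos(\theta l^*)\geq1$, so that monotonicity of $\lambda_j$ up to $d_n$ plus Stirling gives the uniform exponent $\lambda_{L_n}=e^{(1+\soo{1})\frac{2\pi}{5}\log^{1/5}(n)\log\log(n)}\gg\log n$; the paper instead splits at $\varphi_n=1/\log^2(n)$ and uses $\theta$-independent indices, namely the fixed $l=10$ on $\lbbrbb{\varphi_n,\gamma_n}$ (exponent $\asymp\log^6 n$) and $l_n=\lfloor \tfrac67 W\rfloor$ on $\lbbrbb{\delta_n,\varphi_n}$ (exponent $\gtrsim n^{1/7}\sqrt{\log n}$). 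Both routes are correct and yield the needed $\so{1/\sqrt{b(W)}}$; the paper's choice keeps the indices independent of $\theta$ and the small-angle inequality $1-\cos x\geq Cx^2$ suffices throughout, while your adaptive $l^*$ handles the outer region by a cleaner geometric criterion ($\theta l^*\in[\pi/2,\pi)$) at the price of the extra Stirling estimate for $\lambda_{L_n}$, which you carry out correctly (and your checks that $l^*\leq d_n-2$ and $l^*\geq L_n$ are exactly the points that needed care).
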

\begin{proof}
    From \eqref{eq:sum3} it follows that, for any $l<d_n-1$,
    \begin{equation*}
    \begin{split}
    &\abs{\prod_{j=1}^{\infty}\Ebb{e^{i\theta j (V_j-\lambda_j)}\ind{V_j\leq m}}}\leq \abs{\Ebb{e^{i\theta l (V_l-\lambda_l)}\ind{V_l\leq m}}}\\
    &=   \abs{\Ebb{e^{i\theta l (V_l-\lambda_l)}}}+  \so{e^{-\frac{n}{\log^6(n)}}}\\
    &=  e^{-\frac{W^{l}}{l!}\lbrb{1-\cos(\theta l)}}+\so{e^{-\frac{n}{\log^6(n)}}}
    \end{split}
    \end{equation*}
    or the claim \eqref{eq:prod3} follows. To derive \eqref{eq:prod31} we set \[\delta_n<\varphi_n:=\frac{1}{\log^2(n)}<\gamma_n.\]
     For $|\theta|\in \lbbrbb{\varphi_n,\gamma_n}$, we apply \eqref{eq:prod3} with $l=10$ to get
    \begin{equation*}
    \begin{split}
    &\abs{\prod_{j=1}^{\infty}\Ebb{e^{i\theta j (V_j-\lambda_j)} \ind{V_j\leq m}}}\leq  e^{-\frac{W^{10}}{10!}\lbrb{1-\cos(10\theta )}}+\so{e^{-\frac{n}{\log^6(n)}}}\\
    &\leq e^{-C100\frac{W^{10}}{10!}\varphi^2_n}+\so{e^{-\frac{n}{\log^6(n)}}}=e^{-C100\frac{W^{10}}{10!}\frac{1}{\log^{4}(n)}}+\so{e^{-\frac{n}{\log^6(n)}}}\\
    &=\so{\frac{1}{\sqrt{b(W)}}},
    \end{split}
    \end{equation*}
    where we have used that $1-\cos(x)\geq Cx^2$ for all $x$ small enough, the obvious convergence $10\varphi_n\to 0$ and the fact that
    from \eqref{eq:B} it follows that
    \[e^{-C100\frac{W^{10}}{10!}\frac{1}{\log^{4}(n)}}=\so{e^{-\log^5(n)}}=\so{\frac{1}{\sqrt{b(W)}}},\]
    where $b(W)\sim n\log(n)$, see \eqref{eq:B}. Finally, it remains to consider the case $|\theta|\in \lbbrbb{\delta_n,\varphi_n}$. In this case we set $l_n:=\lfloor a W\rfloor<d_n-1$ for some $a\in\lbrb{0,1}$. Then
    \[\limi{n}\abs{\theta l_n}\leq \limi{n}\varphi_n aW=a\limi{n}\frac{\log(n)}{\log^2(n)}=0,\]
    where in the first identity we have employed \eqref{w}.
    Therefore, the bound above can be replicated  with $\delta_n$ for $\varphi_n$ to derive
    \begin{equation*}
    \begin{split}
    &\abs{\prod_{j=1}^{\infty}\Ebb{e^{i\theta j (V_j-\lambda_j)}\ind{V_j\leq m}}}\leq   e^{-Cl^2_n\frac{W^{l_n}}{l_n!}\delta^2_n}+\so{e^{-\frac{n}{\log^6(n)}}}.
    \end{split}
    \end{equation*}
    Now, as $l_n\to\infty$, from Stirling approximation, see \eqref{eq:Stir}, the asymptotic \eqref{w},  and the expressions for $\delta_n$ and $l_n$ we get that
    \begin{equation*}
    \begin{split}
    &l^2_n\frac{W^{l_n}}{l_n!}\delta^2_n   =\frac{a^{\frac32}}{\sqrt{2\pi}} \lbrb{\frac{W}{l_n}}^{l_n}W^{\frac{3}{2}}e^{l_n}\frac{n^{\frac27}}{n\log(n)}\lbrb{1+\so{1}}\\
    &= \frac{a^{\frac32}}{\sqrt{2\pi}}\lbrb{\frac{W}{l_n}}^{l_n}\frac{e^{l_n}\sqrt{\log(n)}}{n^{\frac57}}\lbrb{1+\so{1}}\\
    &\geq \frac{e^{-1}a^{\frac32}}{\sqrt{2\pi}}\lbrb{\frac{W}{l_n}}^{l_n}\frac{e^{a\log(n)}\sqrt{\log(n)}}{n^{\frac57}}\lbrb{1+\so{1}}\\
    &\geq  \frac{e^{-1}a^{\frac32}}{\sqrt{2\pi}}\lbrb{\frac{1}{a}}^{l_n}\frac{\sqrt{\log(n)}}{n^{\frac57-a}}\lbrb{1+\so{1}}\\
    &\geq \frac{e^{-1}a^{\frac32}}{\sqrt{2\pi}}\frac{\sqrt{\log(n)}}{n^{\frac57-a}}\lbrb{1+\so{1}}.
    \end{split}
    \end{equation*}
    Choosing $a=\frac67$, we get the following estimate with some
    $C'>0$:
    \begin{equation*}
    \begin{split}
    &\abs{\prod_{j=1}^{\infty}\Ebb{e^{i\theta j (V_j-\lambda_j)}\ind{V_j\leq m}}}\leq   e^{-C'n^{\frac17}\sqrt{\log(n)}}+\so{e^{-\frac{n}{\log^6(n)}}}=\so{\frac{1}{\sqrt{b(W)}}}.
    \end{split}
    \end{equation*}
    This proves \eqref{eq:prod31} and concludes the claim.
\end{proof}
From Lemma \ref{lem:prod3} and \eqref{eq:prod31} we easily arrive at
\begin{equation}\label{eq:J_31}
\begin{split}
&J_{3,n}=\frac{W^{-n}e^{e^W-1}}{2\pi}\so{\frac{1}{\sqrt{b(W)}}}.
\end{split}
\end{equation}
Now, we are ready to complete the proof of Theorem \ref{thm:asymp}.
\begin{proof}[Proof of Theorem \ref{thm:asymp}]
     Clearly, $J_n=J_{1,n}+J_{2,n}+J_{3,n}$, see \eqref{eq:integral}, \eqref{eq:J_1}, \eqref{eq:J_2} and \eqref{eq:J_3}. However, the asymptotic relations \eqref{eq:J_111}, \eqref{eq:J_21} and \eqref{eq:J_31} yield immediately \eqref{eq:lim} for any $c\in\Rb\cup\curly{-\infty}$. Applying \eqref{eq:lim} with $c=-\infty$ we get that other then the product term the rest in \eqref{eq:lim} is the asymptotic behavior of $B_n/n!$. This deduces \eqref{eq:lim11}.
\end{proof}

Now we are in a position to prove Theorem \ref{thm:main}.
\begin{proof}[Proof of Theorem \ref{thm:main}]
    From \eqref{eq:lim11} of Proposition \ref{thm:asymp} and \eqref{eq:maxIm4} of Theorem \ref{cor:max} we deduce that for any $c\in\Rb$
    \begin{equation}\label{eq:lim12}
    \begin{split}
    &P\lbrb{M_n\leq R_n-c\sqrt{R_n}}=\Phi(-c)\prod_{j=d_n-1;d_n+1}\Pbb{V_j\leq R_n-c\sqrt{R_n}}+\so{1}.
    \end{split}
    \end{equation}
    Let us investigate the asymptotic of the other two terms in \eqref{eq:lim12}.
    Consider $j=d_n+1$ and note that \[T_n:=\frac{\lbrb{V_{d_n+1}-\lambda_{d_n+1}}}{\sqrt{\lambda_{d_n+1}}}\to Z_1.\]
    Therefore, we have that
    \begin{equation}\label{eq:afterMax}
    \begin{split}
    &\Pbb{V_{d_n+1}\leq R_n-c\sqrt{R_n}}=\Pbb{\frac{V_{d_n+1}-\lambda_{d_n+1}}{\sqrt{\lambda_{d_n+1}}}\leq \frac{R_n-\lambda_{d_n+1}}{\sqrt{\lambda_{d_n+1}}}-c\frac{\sqrt{R_n}}{\sqrt{\lambda_{d_n+1}}}}\\
    &=\Pbb{T_n\leq \frac{R_n-\lambda_{d_n+1}}{\sqrt{\lambda_{d_n+1}}}-c\frac{\sqrt{R_n}}{\sqrt{\lambda_{d_n+1}}}}.
    \end{split}
    \end{equation}
    Since  $T_n\to Z_1$ it suffices to understand the behavior of
    \begin{equation*}
    \begin{split}
    &\frac{R_n-\lambda_{d_n+1}}{\sqrt{\lambda_{d_n+1}}}-c\frac{\sqrt{R_n}}{\sqrt{\lambda_{d_n+1}}}.
    \end{split}
    \end{equation*}
    Feeding \eqref{eq:Rn} into \eqref{eq:see1} and applying \eqref{eq:ration} we arrive at
    \begin{equation}\label{eq:see2}
    \begin{split}
    \lbrb{\frac{R_n-\lambda_{d_n+1}}{\sqrt{\lambda_{d_n+1}}}-c\frac{\sqrt{R_n}}{\sqrt{\lambda_{d_n+1}}}}&=\sqrt{R_n}\frac{1-f_n}{\log(n)}\lbrb{1+\so{1}}-c+\so{1}\\
    &=\lbrb{\frac{1}{2\pi}}^\frac14\frac{\sqrt{n}}{\log^{\frac{7}{4}}(n)}\lbrb{1-f_n}\lbrb{1+\so{1}}-c+\so{1}.
    \end{split}
    \end{equation}
    Until the end we will work over subsequences but for clarity we will preserve the notation, e.g. $d_n+1$ instead of $d_{n_k}+1$ etc. Considering \eqref{eq:see2} we see from \eqref{eq:afterMax} that for any $u\in\lbbrbb{0,\infty}$ over a subsequence
    \begin{equation}\label{eq:afterMax1}
    \begin{split}
    &\limi{n}\Pbb{V_{d_n+1}\leq R_n-c\sqrt{R_n}}=\limi{n}\Pbb{T_n\leq \frac{R_n-\lambda_{d_n+1}}{\sqrt{\lambda_{d_n+1}}}-c\frac{\sqrt{R_n}}{\sqrt{\lambda_{d_n+1}}}}\\
    &=\Phi(u-c)\\
    &\iff \limi{n}\lbrb{\frac{R_n-\lambda_{d_n+1}}{\sqrt{\lambda_{d_n+1}}}-c\frac{\sqrt{R_n}}{\sqrt{\lambda_{d_n+1}}}}=u-c\\
    &\iff \limi{n}\lbrb{\frac{1}{2\pi}}^\frac14\frac{\sqrt{n}}{\log^{\frac{7}{4}}(n)}\lbrb{1-f_n}=u.
    \end{split}
    \end{equation}
    In the same vein we derive for any $u\in\lbbrbb{0,\infty}$ over a subsequence that
    \begin{equation}\label{eq:see5}
    \begin{split}
    &\lbrb{\frac{R_n-\lambda_{d_n-1}}{\sqrt{\lambda_{d_n-1}}}-c\frac{\sqrt{R_n}}{\sqrt{\lambda_{d_n-1}}}}=\lbrb{\frac{1}{2\pi}}^\frac14\frac{\sqrt{n}}{\log^{\frac{7}{4}}(n)}f_n\lbrb{1+\so{1}}-c+\so{1}\\
    &\limi{n}\Pbb{V_{d_n-1}\leq R_n-c\sqrt{R_n}}=\Phi(u-c)\\
    &\iff \limi{n}\lbrb{\frac{1}{2\pi}}^\frac14\frac{\sqrt{n}}{\log^{\frac{7}{4}}(n)}f_n=u.
    \end{split}
    \end{equation}
    From \eqref{eq:lim12},\eqref{eq:afterMax1} and \eqref{eq:see5} we then conclude the second item (\textit{ii}) of Theorem \ref{thm:main} over subsequences for which  \[\limi{n}\min\curly{f_n,1-f_n}\frac{\sqrt{n}}{\log^{\frac{7}{4}}(n)}=\limi{n}\vartheta_n\frac{\sqrt{n}}{\log^{\frac{7}{4}}(n)}=\infty.\]
    However, since \eqref{eq:afterMax1} and \eqref{eq:see5} cannot  hold simultaneously over a subsequence for finite values of $u$ we deduce the other two items of Theorem \ref{thm:main}.
    It remains to confirm that all items of Theorem \ref{thm:main} are possible. Clearly, the second one is attainable since $W$ grows logarithmically
    and fills the intervals between integers in a denser and denser way. Therefore, for a subsequence $\curly{n_k}_{k\geq 1}$ such that $\limi{k}\abs{\vartheta_{n_k} -1/2}=0$ we have that item (\textit{ii}) of Theorem \ref{thm:main} can materialize.
    Next,  one ought to understand how well $W$ is approximated by integers. This may be a difficult task in general. We will see that over a subsequence the last limit in \eqref{eq:afterMax1}  may fail. For this purpose we consider $W$. Clearly,
    \[x_m=me^{m},m\geq 1, \]
    are such that
    \[W(x_m)=m.\]
    Then for any $n\in\lbrb{x_m,x_{m+1}}$ we have that
    \[f_n=W(n)-\lfloor W(n)\rfloor=W(n)-W(x_m)=1-W(x_{m+1})+W(n)\]
    or
    \begin{equation*}
    \begin{split}
    &1-f_n=W(x_{m+1})-W(n)=\int_{n}^{x_{m+1}}W'(y)dy.
    \end{split}
    \end{equation*}
    From \eqref{eq} valid for any $x>0$ it follows that $W'(x)\sim x^{-1}$, as $x\to\infty$, and therefore setting $n_m=\lfloor x_{m+1}\rfloor$ and $v_m=x_{m+1}-n_m$ we have as $m\to\infty$, that
    \begin{equation*}
    \begin{split}
    &1-f_{n_m}=W(x_{m+1})-W(n_m)=\int_{n_m}^{x_{m+1}}W'(y)dy\sim \log\lbrb{\frac{x_{m+1}}{n_m}}\sim \frac{v_m}{n_m}.
    \end{split}
    \end{equation*}
    This fed back into the final term in \eqref{eq:afterMax1} over the subsequence $n_m$ yields that
    \begin{equation*}
    \begin{split}
    \limi{m}\frac{\sqrt{n_m}}{\log^{\frac{7}{4}}(n_m)}\lbrb{1-f_{n_m}}= \limi{m}\frac{v_m}{\sqrt{n_m}\log^{\frac{7}{4}}(n_m)}=0.
    \end{split}
    \end{equation*}
    This concludes the proof of the theorem.
\end{proof}

\end{document}